\documentclass[10pt,journal]{IEEEtran}
\usepackage{caption}
\usepackage{amsmath,amssymb,amsfonts}
\usepackage{xcolor}

\definecolor{myblue}{HTML}{5879C1}
\definecolor{myred}{HTML}{B32D23}
\usepackage{graphicx}
\usepackage{booktabs}
\usepackage{color}
\usepackage{geometry}
\usepackage{multirow}
\usepackage{makecell}
\usepackage{algorithm}
\usepackage[numbers,sort]{natbib}
\usepackage{algorithmic}
\usepackage{url}
\usepackage{amsthm}

\usepackage{subcaption}
\usepackage{xurl}
\usepackage{hyperref}
\newtheorem{theorem}{Theorem}
\usepackage{mathtools}
\newtheorem{assumption}{Assumption}

\newtheorem{corollary}{Corollary}

\allowdisplaybreaks

\newcommand{\ie}{\emph{i}.\emph{e}.}
\newcommand{\eg}{\emph{e}.\emph{g}.}
\newcommand{\etal}{\emph{et al}.}

\title{Large-Scale Resilience Planning for Wildfire-Prone Electricity-System via Adaptive Robust Optimization}

\author{
Shuyi Chen,
Shixiang Zhu,
Ramteen Sioshansi
}

\begin{document}
\maketitle

\begin{abstract}
Wildfire risk poses a growing challenge for electric utilities, as powerline failures can ignite wildfires while large fires can disrupt grid operations. Utilities increasingly rely on operational interventions such as Public Safety Power Shutoffs (PSPS) and fast-trip protection to mitigate ignition risk, but these measures can cause widespread service disruptions if deployed indiscriminately. Infrastructure planning decisions--such as feeder sectionalization and protection configuration--play a key role in determining how effectively these interventions can be targeted. We develop a planning framework for wildfire resilience that jointly optimizes long-term infrastructure configuration and short-term operational response under uncertain ignition risk. The problem is formulated as a tri-level optimization model capturing the interaction between infrastructure planning, wildfire risk realization, and adaptive operational decisions. To represent system-wide ignition uncertainty, we construct a data-driven uncertainty set that combines segment-level prediction intervals with group-level uncertainty budgets. Leveraging the model structure, we reformulate the problem as a tractable robust optimization model and develop a scalable column-and-constraint generation algorithm. Synthetic experiments and a large-scale case study on an investor-owned utility distribution system show that coordinated planning of sectionalization and operational mitigation strategies can substantially reduce wildfire risk while maintaining service reliability.
\end{abstract}

\begin{IEEEkeywords}
Wildfire mitigation, power system resilience, adaptive robust optimization, column-and-constraint generation.
\end{IEEEkeywords}

\section{Introduction}

Wildfires have become an increasingly destructive natural hazard across the western United States, driven by the convergence of extreme weather, accumulated fuels, and widespread ignition sources~\cite{vahedi2025wildfiregrid}. 
Electric power systems play a central role in this evolving risk landscape through a two-way interaction with wildfire events. 
On one hand, powerline failures can ignite wildfires, particularly during extreme weather conditions when vegetation is dry and winds are strong~\cite{mitchell2013powerline,vahedi2025wildfiregrid,9677975}. 
On the other hand, large wildfires can damage transmission and distribution infrastructure, disrupt electricity service, and impose substantial restoration costs on utilities. 
For example, the 2018 Camp Fire in California--caused by a transmission line failure--killed $85$ people, destroyed nearly $19{,}000$ structures, and ultimately forced the responsible utility into bankruptcy under a \$$13.5$ billion settlement~\cite{calfire2019campfire,nyt2020bankruptcy}. 
These events highlight the tight coupling between wildfire dynamics and power system operations, and underscore the urgent need for grid planning and operational strategies that mitigate wildfire risk while maintaining reliable electricity service.

To reduce the risk of powerline ignitions, utilities in wildfire-prone regions such as California have adopted several operational mitigation measures. 
Two widely used strategies are Public Safety Power Shutoffs (PSPS)~\cite{huang2023review, lesage-landry2023psps} and fast-trip protection settings, such as Enhanced Powerline Safety Settings (EPSS)~\cite{cpuc_peds, cpuc2025utility}. 
PSPS proactively de-energizes distribution circuits during periods of extreme wildfire risk, eliminating the possibility of powerline ignition but causing widespread service outages. 
Fast-trip protection instead relies on protective relays configured to trip circuits within milliseconds of detecting a fault, reducing the likelihood that sustained arcing faults ignite surrounding vegetation. 
Compared with PSPS, fast-trip protection provides finer spatial and temporal control but cannot eliminate ignition risk entirely. 

Utilities must therefore carefully determine where and when to deploy these mitigation strategies in order to balance wildfire safety with the reliability and economic costs of service interruptions.
Utilities often improve this trade-off through sectionalization, \ie, physically dividing distribution feeders into smaller segments using switches and reclosers~\cite{772388,517529}.
By strategically placing switches and selectively enabling fast-trip protection in high-risk areas, utilities can localize de-energization actions and reduce the number of affected customers.
These infrastructure investments constitute long-term planning decisions that shape downstream operational flexibility.
In particular, the placement of switches and the resulting feeder segmentation determine the spatial granularity with which PSPS and fast-trip protection can be deployed during high-risk conditions.

Planning such infrastructure upgrades is challenging because utilities must make long-term decisions under substantial uncertainty in future wildfire risk. 
Operational interventions are deployed in response to realized risk conditions, which may vary significantly across weather scenarios and geographic locations. 
As a result, infrastructure planning, wildfire risk realization, and operational response are tightly coupled. 
Infrastructure decisions--such as feeder sectionalization and fast-trip protection settings--must be determined in advance, while wildfire ignition risk may later realize across the network in potentially adverse patterns. 
Operational actions such as PSPS and fast-trip protection are then deployed to mitigate ignition risk under the realized conditions, subject to the flexibility allowed by the planned grid configuration. 
This sequential interaction naturally leads to a tri-level decision structure.

Applying this framework at utility scale raises two key technical challenges:
($i$) Accurately characterizing system-wide wildfire ignition risk is difficult due to the high dimensionality and strong spatial dependence of risk across thousands of circuits. 
Ignition risk is influenced by shared meteorological conditions, vegetation characteristics, and network attributes, leading to complex correlations across locations. 
Uncertainty sets that ignore these dependencies may produce unrealistic worst-case scenarios, whereas overly conservative sets can lead to excessively pessimistic planning decisions.
($ii$) The resulting tri-level optimization problem is computationally challenging. 
Planning decisions involve discrete infrastructure investments, risk realizations span a high-dimensional uncertainty space, and operational decisions require solving large-scale mixed-integer problems across thousands of circuits.

To address these challenges, we develop an integrated framework for wildfire resilience planning that combines adaptive robust optimization with data-driven uncertainty modeling. 
We formulate the problem as a tri-level optimization model that captures the interaction between infrastructure configuration, adverse wildfire risk realization, and operational response. 
At the upper level, utilities determine infrastructure planning decisions, including feeder sectionalization and fast-trip protection configurations. 
At the intermediate level, wildfire ignition risk realizes adversarially within an uncertainty set. 
At the lower level, operational actions, including PSPS deployment and fast-trip protection, adapt to the realized risk conditions and the chosen grid configuration. 
By exploiting the multiplicative structure of this interaction, we reformulate the tri-level model into a tractable robust optimization problem and develop a tailored column-and-constraint generation (CCG) algorithm~\cite{zhao2012exact,ZENG2013457} that enables scalable solution.

A key component of the framework is the construction of an uncertainty set for system-wide ignition risk scenarios, which are inherently high-dimensional.  
Traditional approaches either fail to capture realistic joint variation across circuits or produce overly conservative uncertainty sets that are uninformative for decision-making~\cite{zhou2025hierarchicalprobabilisticconformalprediction}. 
To address this issue, we construct a data-driven uncertainty set that balances statistical coverage with operational tractability~\cite{timans2025maxrank, zhou2025hierarchicalprobabilisticconformalprediction}. 
The construction proceeds in two steps. 
($i$) We build a baseline uncertainty set as the Cartesian product of circuit-level prediction intervals estimated from historical data. 
($ii$) We tighten this set using group-level uncertainty budgets that limit the aggregate deviation of ignition risk across randomly assigned circuit groups. 
The resulting uncertainty set is defined through linear constraints, allowing seamless integration with the optimization model while capturing important system-wide dependencies.

We validate the proposed framework through controlled synthetic experiments and a large-scale case study based on distribution system of a major California investor-owned utility (IOU), covering more than $3{,}000$ distribution circuits.
The results show that coordinated planning of sectionalization and operational mitigation strategies can reduce wildfire risk by $38.88$\% while maintaining required service reliability constraints.
Our analysis yields several managerial insights: 
($i$) Infrastructure configurations that enhance operational flexibility can significantly reduce wildfire risk while maintaining reliability constraints, as they enable more precise isolation and targeted responses during high-risk conditions.
($ii$) The results highlight the value of jointly optimizing planning and operations: infrastructure investments that appear suboptimal in isolation can substantially improve system performance once their operational flexibility under extreme wildfire conditions is considered. 
($iii$) Our findings also offer a nuanced perspective on the ongoing shift from PSPS toward fast-trip protection programs~\cite{jordan2025fire, cpuc2025utility}. Within the proposed framework, PSPS, when coordinated with long-term planning and sectionalization, can outperform broad fast-trip deployment when ignition risk is accurately quantified. In this setting, targeted de-energization can substantially reduce wildfire risk while limiting reliability impacts. However, this conclusion relies on the stylized assumptions of the model. In practice, the relative effectiveness of PSPS and fast-trip strategies may vary depending on forecasting uncertainty and operational constraints.

\section{Related Work}

In the power-systems literature, preventive planning measures and emergency operational responses for wildfire mitigation have largely been studied in isolation. Consequently, there is limited work integrating long-term infrastructure planning with reactive operational interventions in a unified decision framework. Our work addresses this gap by jointly optimizing long-term sectionalization and fast-trip configuration decisions while accounting for short-term de-energization actions under statistically calibrated uncertainty sets and an efficient optimization algorithm. 
Our work therefore relates to four streams of literature: wildfire mitigation in power systems, predictive model and uncertainty quantification for ignition risk, grid planning under uncertainty, and adaptive robust optimization with discrete recourse problem.

The wildfire-mitigation literature generally distinguishes between preventive planning and reactive operations~\cite{9677975}. 
On the planning side, prior work studies infrastructure hardening and asset-management interventions such as line upgrades~\cite{TAYLOR2022108592}, switching-device placement~\cite{10994344}, and network expansion decisions aimed at reducing ignition risk \emph{ex ante}~\cite{10032578}. 
On the operational side, prior work focuses on de-energization strategies such as public safety power shutoffs (PSPS)~\cite{lesage-landry2023psps,9305959} and automated disconnection schemes triggered by wildfire-prone conditions~\cite{9737413}. 
In particular, a growing body of work formulates PSPS decisions as a risk--reliability trade-off under uncertain fire-weather conditions~\cite{lesage-landry2023psps,9305959}. 
More recently, utilities have also deployed automated protection schemes, such as fast-trip protection programs, to reduce ignition risk under high-threat conditions~\cite{cpuc_peds,cpuc_fasttrip_benchmark}. 
Despite these advances, most existing studies analyze preventive investments and reactive operations separately. 
In contrast, our work explicitly models their interaction by jointly optimizing long-term sectionalization and fast-trip configuration decisions while accounting for operational interventions after wildfire risk is realized.

Operationalizing such integrated planning requires predictive models that produce reliable uncertainty bounds on future ignition risk. 
Point process models provide a natural statistical framework for modeling wildfire ignitions as discrete events distributed across space and time. 
For example, prior work applies point process models to outage events in power delivery networks~\cite{zhu2025quantifying}, and recent work develops convolutional non-homogeneous Poisson processes for ignition modeling along transmission lines~\cite{Wei02012025}. 
While these models yield flexible risk estimates, they do not directly provide finite-sample valid prediction bounds for downstream planning decisions.

Conformal prediction offers a complementary framework for uncertainty quantification with finite-sample coverage guarantees under exchangeability~\cite{10.5555/1712759.1712773,vovk2005algoworld,shafer2008tutorial}. 
However, historical ignition data typically exhibit strong temporal dependence, violating the exchangeability assumption and invalidating standard conformal guarantees. 
Our approach builds on recent extensions of conformal inference for dependent and non-exchangeable data~\cite{10121511,zhou2025hierarchicalprobabilisticconformalprediction} and derives valid uncertainty bounds under an $\alpha$-mixing assumption. 

Our work is also related to the broader literature on grid planning under uncertainty, particularly robust optimization approaches for resilience enhancement and operational planning~\cite{wang2025gen, 7885130,chen2025enhancing,chen2025global}. 
For example, Chen \etal~\cite{chen2025enhancing} proposes a tri-level adaptive robust framework with conformal uncertainty sets for resilience planning against outages under extreme-weather disruptions, while Huang \etal~\cite{7885130} study the integration of preventive and emergency responses for power-grid resilience enhancement. 
In the wildfire context, Pianc\'{o} \etal~\cite{10994344} study distribution-system planning under wildfire risk using decision-dependent uncertainty models that capture feedback between line flows and failure probabilities. 
While these studies provide important foundations for resilience-aware planning, our work focuses specifically on coordinating long-term grid design decisions with short-term wildfire mitigation actions.

From a computational perspective, our formulation builds on adaptive robust optimization with discrete recourse, which naturally captures the interaction between here-and-now planning decisions and wait-and-see operational responses after uncertainty is realized. 
However, utility-scale instances with discrete investments and mixed-integer recourse are computationally challenging due to the combinatorial structure of both stages. 
Nested column-and-constraint generation (CCG) is a standard exact approach for solving two-stage optimization problems with discrete recourse variables~\cite{ZENG2013457,zhao2012exact}.
Related computational challenges have also been examined in recent studies on robust binary optimization with selective adaptability and multistage adaptive robust optimization in power systems~\cite{bodur2024networkflowmodelsrobust,lorca2016multistage}.
We leverage CCG within our framework by embedding statistically calibrated uncertainty sets—constructed to capture cross-segment dependence and covariate shift—into an adaptive robust model that jointly optimizes sectionalization investments and operational wildfire-mitigation actions.

\begin{figure}[!t]
    \centering
    \includegraphics[width=\linewidth]{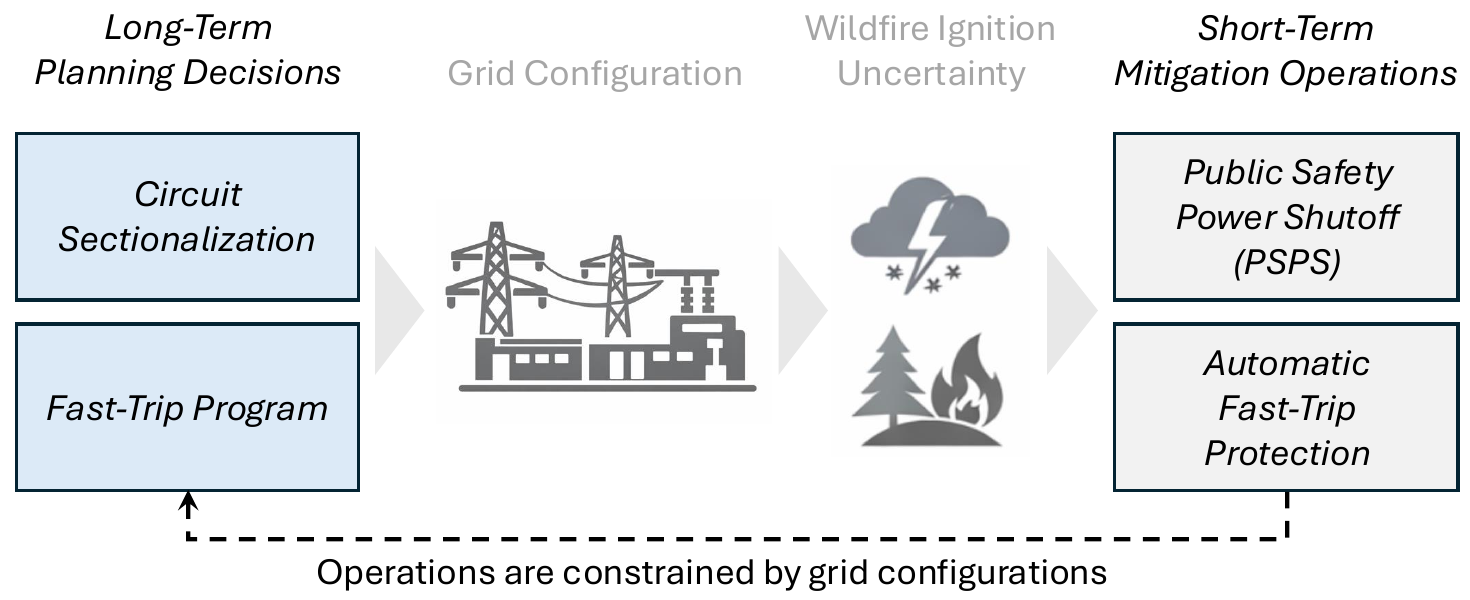}
    \caption{The decision pipeline of wildfire resilience planning. }
    \label{fig:framework}
\vspace{-.1in}
\end{figure}

\section{Wildfire Resilience Planning Framework}
\label{sec:framework}

This section introduces the fire resilience planning framework. We first describe the planning and operational decision process adopted by utilities in practice. We then formalize this framework as a tri-level optimization model that captures the interaction between planning decisions, operational mitigation actions, and uncertainty in wildfire ignition scenarios.

\subsection{Planning and Operational Structure}

Mitigating wildfire risk in electricity systems requires coordinated decision-making across multiple time scales. 
As illustrated in Fig.~\ref{fig:framework}, utilities must first determine long-term infrastructure and protection configurations that shape the flexibility of the grid. 
Once these configurations are in place, utilities rely on operational interventions during high-risk conditions to reduce ignition probability and limit wildfire exposure. 

\emph{Long-term grid planning}.
Utilities must first make long-term planning decisions regarding grid infrastructure and protection configuration, which determine the operational flexibility of the system during high-risk conditions. 
We consider two such planning decisions: feeder sectionalization and fast-trip protection configuration. 
First, feeder sectionalization divides large distribution feeders into smaller controllable segments~\cite{772388,517529}. 
By installing sectionalizing devices, utilities can isolate portions of the network and selectively de-energize specific segments without shutting down the entire circuit, which improves operational flexibility.
Second, utilities determine which sectionalized segments should be equipped with fast-trip protection~\cite{cpuc_peds, cpuc_fasttrip_benchmark}. 
Fast-trip protection refers to protection settings that automatically disconnect a circuit when abnormal electrical conditions, such as faults or line contacts, are detected. 

\emph{Short-term operational mitigation}.
Once these planning decisions are in place, utilities rely on short-term operational interventions during periods of elevated wildfire risk. 
One widely adopted measure is Public Safety Power Shutoff (PSPS)~\cite{huang2023review, lesage-landry2023psps}, in which utilities intentionally de-energize selected line segments when weather and environmental conditions signal a high likelihood of wildfire ignition. 
PSPS events are typically implemented based on short-term risk forecasts and operational constraints.
In addition to manual interventions such as PSPS, circuits equipped with fast-trip protection provide an automatic safeguard~\cite{cpuc_peds, cpuc_fasttrip_benchmark}. 
By significantly reducing the time that energized equipment remains connected during fault events, these protection schemes limit the opportunity for electrical faults to ignite nearby vegetation, thereby decreasing the probability that faults escalate into fire ignitions.

\emph{Planning trade-offs under uncertainty}.
Although these interventions can effectively reduce wildfire ignition risk, they may also interrupt electricity service for customers connected to the affected circuits. As a result, utilities must navigate an inherent trade-off between lowering ignition risk and maintaining service reliability.
Effective resilience planning therefore requires balancing these competing objectives while accounting for uncertainty in the location and intensity of potential ignition events. In this paper, we focus on minimizing the worst-case wildfire ignition risk through coordinated planning and operational decisions, subject to constraints on infrastructure investment budgets and required reliability levels.

\subsection{Tri-Level Optimization Model}

Consider a distribution system consisting of $n$ line segments (\eg, circuits) that are candidates for infrastructure hardening and operational mitigation. 
The planning authority must determine which segments to sectionalize and which segments to equip with fast-trip protection. 
We model this problem as a tri-level optimization:
\begin{align}
\label{eq:tri-level}
\min_{\mathbf{x},\mathbf{y}} 
\max_{\mathbf{u}\in\mathcal{U}} 
\min_{\mathbf{z}}
\sum_{i=1}^n h_i (1 - \beta_i y_i)(u_i -  z_i),
\end{align}
where $\mathbf{x} \in \{0,1\}^n$ denotes sectionalization decisions, with $x_i = 1$ indicating that segment $i$ can be independently controlled, and $\mathbf{y} \in \{0,1\}^n$ denotes fast-trip protection configurations, with $y_i = 1$ indicating that segment $i$ is equipped with fast-trip protection.
The vector $\mathbf{u} \in \mathcal{U} \subseteq \mathbb{R}_+^n$ represents the number of ignition incidents that may occur on each segment during the operational window (\eg, a high-risk weather event), where $\mathcal{U}$ captures plausible ignition scenarios based on historical patterns and meteorological conditions. 
Prior to this period, operators observe near-term wildfire risk forecasts (\eg, next-day weather conditions) that serve as informative but imperfect signals of ignition exposure. Based on these forecasts, operators may deploy mitigation actions $\mathbf{z} \in \mathbb{Z}_+^n$, where $ z_i$ denotes the number of PSPS actions applied to segment $i$. Such actions are feasible only on sectionalized segments.
The parameter $h_i \ge 0$ represents the consequence of an ignition on segment $i$, which may reflect wildfire damage potential, customer outage impact, or other system risk metrics used by utilities and regulators.

The objective function in~\eqref{eq:tri-level} captures the combined effects of operational and protection-based mitigation. 
First, PSPS actions are assumed to eliminate the electrical ignition source during the de-energization period~\cite{mitchell2013power, cpuc2020utility}. Thus, each PSPS action on segment $i$ removes one potential ignition event, so that applying $ z_i$ actions reduces the number of ignition incidents from $u_i$ to $u_i -  z_i$.
Second, fast-trip protection reduces the likelihood that a fault escalates into a sustained ignition but does not fully eliminate it~\cite{cpuc2023epss, pnnl2024wildfire}. 
When enabled ($y_i = 1$), the expected number of ignition incidents is reduced by a segment-specific factor $\beta_i$. 
Consequently, the residual ignition exposure contributing to system impact is modeled as $(1-\beta_i y_i)(u_i -  z_i)$.

The optimization~\eqref{eq:tri-level} is subject to three types of constraints: planning budgets, a system-level reliability requirement, and feasibility conditions:
\begin{subequations}
\label{eq:tri-constraints}
\begin{align}
&\sum_{i=1}^n c_i x_i \le C, 
\label{eq:tri-level-a}\\
&\sum_{i=1}^n b_i y_i \le B, 
\label{eq:tri-level-b}\\
&\sum_{i=1}^n \left( \gamma_i y_i + \delta_i  z_i \right) \le W, 
\label{eq:tri-level-c}\\
& y_i \le x_i,~  z_i \le u_i x_i , \quad \forall i=1,\dots,n.\label{eq:tri-level-d}
\end{align}
\end{subequations}
Constraints~\eqref{eq:tri-level-a} and~\eqref{eq:tri-level-b} impose budget limits on sectionalization and fast-trip configuration, where $c_i$ and $b_i$ denote the respective investment costs on segment $i$. 
Constraint~\eqref{eq:tri-level-c} enforces a system-level reliability requirement, where $\gamma_i$ and $\delta_i$ quantify the service interruption impacts associated with fast-trip configuration and PSPS actions, respectively, under the reliability threshold $W$. 
Constraint~\eqref{eq:tri-level-d} ensures feasibility by restricting both fast-trip configuration and PSPS actions to sectionalized segments, while also requiring that PSPS mitigation $ z_i$ does not exceed the ignition count $u_i$.

\section{Uncertainty Quantification for Fire Ignition}
\label{subsec:fwer_uq}

This section describes the construction of the uncertainty set $\mathcal U$ in~\eqref{eq:tri-level}. 
Recall that $\mathbf{u}$ is an $n$-dimensional vector representing the number of ignition incidents across line segments during a high-risk operational window. 
Characterizing uncertainty for such high-dimensional vectors is challenging due to two reasons: 
Classical approaches often either fail to capture realistic joint variations across segments or produce overly conservative sets that are not informative for downstream decisions~\cite{zhou2025hierarchicalprobabilisticconformalprediction}.
Moreover, the structure of the uncertainty set must be designed carefully to ensure that the resulting optimization problem remains computationally tractable.

To address these challenges, we develop a conformal framework~\cite{shafer2008tutorial} that provides valid coverage guarantees while capturing meaningful variation across ignition scenarios and remaining computationally tractable. 
The construction proceeds in two steps:
($i$) We first construct the set by taking the Cartesian product of the segment-wise prediction intervals;
($ii$) We then tighten the set by imposing \emph{group}-level uncertainty budgets on aggregated segment deviations, with segments randomly assigned to groups.
Importantly, the resulting uncertainty set is defined through linear constraints, which preserves tractability of the downstream optimization problem.

\subsection{Prediction Model for Fire Ignition}

To construct segment-wise prediction sets, we first model wildfire ignition events using a non-homogeneous Poisson process~\cite{Wei02012025}. The resulting predictions are then used to form conformal bounds with valid coverage guarantees. We note that the proposed framework is model-agnostic and distribution-free: the predictive model serves only to generate point forecasts, and any suitable model can be used in place of the specification adopted here for illustration.

The ignition process is observed over a spatial domain $\mathcal S \subseteq \mathbb{R}_+^2$ and a time window $[0,T]$. 
Define $\mathbb{N}(\cdot)$ as the counting measure of ignition events on the space--time domain $\mathcal S \times [0,T]$, so that, for any measurable set $A \subseteq \mathcal S \times [0,T]$, $\mathbb{N}(A)$ gives the number of ignition events occurring in $A$. 

For each segment $i$, let $\mathcal S_i \subset \mathcal S$ denote the geographical region associated with that segment. We partition the observation window into uniform intervals $[t_k,t_k + \Delta)$ with length $\Delta$, and define $u_{ik} \coloneqq \mathbb{N}(\mathcal S_i \times [t_k,t_k + \Delta))$ as the number of ignition events on segment $i$ during the $k$-th interval. 
The vector $\mathbf{v}_{ik} \in \mathbb{R}^p$ collects the $p$ observed spatio-temporal covariates for segment $i$ over the same interval.

The ignition count on segment $i$ during the $k$-th interval is modeled using a Poisson regression model. 
Specifically, conditional on the covariates $\mathbf{v}_{i,k}$, the event count satisfies
\[
u_{ik} \mid \mathbf{v}_{ik} \sim \mathrm{Poisson}(\lambda_{ik}),
\]
where the conditional intensity $\lambda_{ik}$ is modeled as follows
\[
\log \lambda_{ik} = \phi(\mathbf{v}_{ik}),
\]
with $\phi:\mathbb{R}^p \to \mathbb{R}$ representing a predictive mapping from the covariates to the log-intensity of ignition events.

The mapping $\phi(\cdot)$ is learned from historical ignition data $\mathcal{D} \coloneqq \{(u_{ik}, \mathbf{v}_{ik})\}$ using maximum likelihood estimation. 
Specifically, given a training dataset consisting of observed counts $u_{ik}$ and corresponding covariates $\mathbf{v}_{ik}$ across segments and time intervals, the parameters of $\phi$ are obtained by maximizing the Poisson log-likelihood~\cite{mccullagh1989glm}:
\[
\ell(\phi; \mathcal{D}) = \sum_{i,k} \Big(u_{i,k}\phi(\mathbf{v}_{ik}) - \exp(\phi(\mathbf{v}_{ik}))\Big).
\]
The function $\phi(\cdot)$ can be specified using a generalized linear model, gradient-boosted trees, or neural networks, depending on the desired level of model flexibility.

Given the fitted $\hat{\phi}$, the predicted ignition count for segment $i$ over a future interval $[T,T+\Delta)$ with its covariates $\mathbf{v}_i$ is
\[
\hat{u}_i = \exp\{\hat{\phi}(\mathbf{v}_i)\}.
\]
For notational simplicity, we omit time indices when referring to future quantities.

\subsection{Uncertainty Set Construction}

Suppose the system consists of $n$ segments, which are further partitioned into $n' \ll n$ disjoint groups. Let $g_{ii'} \in \{0,1\}$ denote the group membership, where $g_{ii'}=1$ if segment $i$ belongs to group $i'$ and $g_{ii'}=0$ otherwise.

The group assignments $(g_{ii'})$ are generated randomly prior to calibration and remain fixed thereafter. Random grouping reduces correlation among segments within the same group, which leads to tighter group-level bounds and improves the efficiency of the resulting uncertainty set. The number of groups $n'$ controls the trade-off between conservativeness and efficiency. When $n'$ is small, each group contains many segments, and the group constraints approach global aggregation constraints, which can be overly conservative. When $n'$ is large, each group contains only a few segments, limiting the ability of the group constraints to capture meaningful aggregated variation. 
To simplify notation, we concatenate the segment- and group-level quantities into a single vector representation:
\[
\boldsymbol{\nu}(\mathbf{u})
\coloneqq
\bigl(
u_1,\dots,u_n,~
\sum_{i:g_{i1'}=1}u_i,\dots,
\sum_{i:g_{in'}=1}u_i
\bigr)
\in\mathbb{R}_+^{n+n'}.
\]
The group-level prediction is obtained by aggregating the predictions of segments within the group, \ie, $\hat{u}_{i'} = \sum_{i: g_{ii'}=1} \hat{u}_i$.

The uncertainty set is constructed from prediction intervals at both the segment and group levels. 
These intervals are calibrated so that the true ignition scenario lies within the resulting set with high probability. 
Formally, the uncertainty set $\mathcal U$ for a future interval $[T,T+\Delta]$ is defined as
\begin{equation}
\label{eq:uncertainty_set_compact}
\mathcal U
\coloneqq
\left\{
\mathbf{u}\in\mathbb{R}_+^n:
L_i \le \boldsymbol{\nu}_i(\mathbf{u}) \le U_i,~\forall i \le n+n'
\right\},
\end{equation}
where $L_i$ and $U_i$ denote the lower and upper bounds corresponding to the segment- and group-level predictions. 
These bounds are chosen to satisfy the coverage guarantee
\[
\mathbb{P}\{\mathbf{u} \in \mathcal U\} \ge 1-\alpha,
\]
where $\alpha$ denotes the prescribed miscoverage level.

To construct these bounds, the historical dataset is divided into two subsets,
$
\mathcal D = \mathcal D_{\mathrm{tr}} \cup \mathcal D_{\mathrm{cal}},
$
where the predictive models are fitted using the training set $\mathcal D_{\mathrm{tr}}$, and the calibration set $\mathcal D_{\mathrm{cal}}$ is used to evaluate prediction errors and determine the width of the prediction intervals.
For each observation at time $k$, denoted by $\mathbf{u}_k = (u_{ik})_{i=1}^n$, in the calibration set $\mathcal D_{\mathrm{cal}}$, we compute the nonconformity score:
\begin{equation}
\label{eq:score}
e_k \coloneqq \bigl\|\boldsymbol{\nu}(\mathbf{u}_k) - \boldsymbol{\nu}(\hat{\mathbf{u}}_k)\bigr\|_\infty,
\end{equation}
which measures the maximum deviation between the realized and predicted ignition counts across both segment- and group-level quantities. 
This choice aligns with the structure of the uncertainty set in~\eqref{eq:uncertainty_set_compact}, which requires all segment- and group-level bounds to hold simultaneously. 
Using the $\ell_\infty$ norm therefore captures the worst-case prediction error across all components of $\boldsymbol{\nu}(\mathbf{u})$.

Given the nonconformity scores, we construct prediction intervals for each segment or group $i$ as follows:
\begin{equation}
\label{eq:bounds}
\begin{aligned}
    L_i \coloneqq &~\big(\boldsymbol{\nu}_i(\hat{\mathbf{u}})  - \hat{Q}(\alpha)\big)_+,\quad \forall i \le n+n',\\
    U_i \coloneqq &~\boldsymbol{\nu}_i(\hat{\mathbf{u}}) + \hat{Q}(\alpha),\quad \forall i \le n+n',\\
\end{aligned}
\end{equation}
where $(u)_+ \coloneqq \max\{u,0\}$ and 
$\hat{Q}(\alpha)$ is the empirical $\alpha$-quantile of the nonconformity scores $\{e_k\}$.

To allow temporal dependence across data points, we assume that the fitted prediction model captures the main temporal structure of ignition events sufficiently well so that the resulting nonconformity scores form a stationary weakly dependent process. This is formalized in Assumption~\ref{assump:mix_scores}.
\begin{assumption}[$\alpha$-mixing score process]
\label{assump:mix_scores}
Conditional on $\mathcal D_{\mathrm{tr}}$, the calibration nonconformity scores $\{e_k\}_{k\in\mathcal D_{\mathrm{cal}}}$ together with nonconformity score $e$ over the future time period $[T, T+\Delta)$
form the observations of a strictly stationary $\alpha$-mixing process
$\{e_k\}_{k\in\mathbb Z}$.
That is, its mixing coefficients
\[
\alpha_{\mathrm{mix}}(\Delta)
\coloneqq
\sup_{t\in\mathbb Z}
\sup_{\substack{
A\in \sigma(e_k : k\le t),\\
B\in \sigma(e_k : k\ge t+\Delta)
}}
\bigl|
\mathbb P(A\cap B)-\mathbb P(A)\mathbb P(B)
\bigr|,
\quad \Delta\ge 1,
\]
satisfy
$\
\sum_{\Delta\ge 1}\alpha_{\mathrm{mix}}(\Delta)\le \Gamma
$
for some finite constant $\Gamma>0$.
Here, $\sigma(\cdot)$ denotes the $\sigma$-field generated by the indicated collection of score variables.
\end{assumption}

Under Assumption~\ref{assump:mix_scores}, the uncertainty set $\mathcal U$ retains a finite-sample coverage guarantee even when the calibration scores are temporally dependent.
\begin{theorem}[Coverage under $\alpha$-mixing]
\label{thm:hierarchical_coverage_main} 
The uncertainty set $\mathcal U$ constructed in~\eqref{eq:uncertainty_set_compact} satisfies
\begin{equation}
\label{eq:exact_cov_main}
\mathbb P\bigl(
\mathbf u\in\mathcal U
~\big|~
\mathcal D_{\mathrm{tr}}
\bigr)
\ge
1-\alpha-\epsilon,
\end{equation}
where
\begin{equation}
\label{eq:asymptotic}
\begin{aligned}
\epsilon
\coloneqq &
2\Bigl(3+8\Gamma\Bigr)^{1/3}\frac{(3+\frac{\log |\mathcal{D}_{\mathrm{cal}}|}{2\log2})^{2/3}}{|\mathcal{D}_{\mathrm{cal}}|^{1/3}}\\
=&
\mathcal{O}\left(\frac{(\log |\mathcal D_{\mathrm{cal}}|)^{2/3}}{|\mathcal D_{\mathrm{cal}}|^{1/3}}\right).
\end{aligned}
\end{equation}
In particular, if $\Gamma$ is known, one can simply set
$
\tilde\alpha \coloneqq (\alpha-\epsilon)_+
$ and replace $\hat Q(\alpha)$ with $\hat Q(\tilde\alpha)$ in~\eqref{eq:bounds}.
\end{theorem}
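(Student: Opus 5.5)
The plan is to reduce the set-valued event to a scalar event about the score $e$, and then invoke a concentration result for empirical CDFs of $\alpha$-mixing sequences. This is the approach of Oliveira et al.\ for split conformal under $\beta$/$\alpha$-mixing, which is the source of the constants $(3+8\Gamma)^{1/3}$ and $\log n/(2\log 2)$.

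\textbf{Reduction to a score event.} Let $e=\|\boldsymbol{\nu}(\mathbf u)-\boldsymbol{\nu}(\hat{\mathbf u})\|_\infty$ be the future score. If $e\le \hat Q(\alpha)$, then every coordinate satisfies $|\boldsymbol{\nu}_i(\mathbf u)-\boldsymbol{\nu}_i(\hat{\mathbf u})|\le\hat Q(\alpha)$. Together with $\boldsymbol{\nu}_i(\mathbf u)\ge 0$, which handles the $(\cdot)_+$ in $L_i$, this gives $L_i\le\boldsymbol{\nu}_i(\mathbf u)\le U_i$ for all $i\le n+n'$, so $\mathbf u\in\mathcal U$. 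Hence
\[
\mathbb P(\mathbf u\in\mathcal U\mid\mathcal D_{\mathrm{tr}})\ge \mathbb P(e\le \hat Q(\alpha)\mid\mathcal D_{\mathrm{cal}}\text{-free}).
\]
The claim therefore reduces to a statement about the empirical $(1-\alpha)$-level quantile, which is the intended reading of $\hat Q(\alpha)$: the $\lceil(1-\alpha)(m+1)\rceil$-th order statistic with $m=|\mathcal D_{\mathrm{cal}}|$. We condition on $\mathcal D_{\mathrm{tr}}$ throughout, so $\hat\phi$ is fixed and the scores are functions of the stationary mixing process.

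\textbf{Empirical CDF concentration.} Let $F$ be the stationary marginal CDF of the scores and $\hat F_m$ the empirical CDF of the calibration scores. By construction $\hat F_m(\hat Q)\ge 1-\alpha$. The key step is a uniform bound of the form
\[
\mathbb P\bigl(\sup_t|\hat F_m(t)-F(t)|>\eta\bigr)\le \delta(\eta,m,\Gamma).
\]
To obtain it, I would discretize $t$ on a dyadic grid of $\approx\log m/(2\log 2)$ levels, which produces the $3+\log m/(2\log2)$ factor. At each grid point, bound the variance of $\sum_k(\mathbf 1\{e_k\le t\}-F(t))$ by $m(1+4\Gamma)$ using the covariance inequality for $\alpha$-mixing indicators, $|\mathrm{Cov}|\le 4\alpha_{\mathrm{mix}}$, summed over lags. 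Then apply Chebyshev with a chaining/union argument across the dyadic levels.

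\textbf{Handling the dependence between $e$ and the calibration scores.} The future score $e$ is dependent on $\hat Q$. I would write
\[
\mathbb P(e>\hat Q)\le \mathbb P(e>\hat Q,\ \|\hat F_m-F\|_\infty\le\eta)+\delta .
\]
On the good event, $F(\hat Q)\ge 1-\alpha-\eta$. To bound the first term, I would either use a deterministic quantile $q^*=F^{-1}(1-\alpha-\eta)\le\hat Q$, which removes the dependence and gives $\mathbb P(e>q^*)\le\alpha+\eta$, or treat the future score as an $(m+1)$-th point in the same mixing sequence and include it in the empirical CDF, at a cost of $O(1/m)$. Combining the two pieces gives miscoverage at most $\alpha+\eta+\delta(\eta)$.

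\textbf{Optimizing $\eta$ and the main obstacle.} Balancing $\eta$ against a Chebyshev-type $\delta\sim(3+8\Gamma)(\log m)^2/(m\eta^2)$ gives $\eta\asymp((3+8\Gamma)(\log m)^2/m)^{1/3}$. This yields $\epsilon=2(3+8\Gamma)^{1/3}(3+\log m/(2\log2))^{2/3}m^{-1/3}$, with the factor 2 coming from $\eta+\delta$ being equal at the optimum. The final remark follows by applying the bound with $\tilde\alpha$ in place of $\alpha$.

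The main obstacle is the second step: obtaining a uniform-in-$t$ DKW-type bound under only summable $\alpha$-mixing, with exactly these constants. The plan is to follow the dyadic-chaining variance argument rather than to attempt an exponential inequality.
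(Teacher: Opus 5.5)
Your skeleton matches the paper's proof. It has the same four pieces:
\begin{itemize}
\item the reduction to $\{e\le\hat Q(\alpha)\}$;
\item the good event $\{\sup_q|\hat F_m(q)-F_e(q)|\le\epsilon/2\}$;
\item the deterministic quantile $q_-=\inf\{q:F_e(q)\ge 1-\alpha-\epsilon/2\}$, which lies below $\hat Q(\alpha)$ on the good event (your $q^*$ option);
\item Markov on a second-moment bound with $\eta=\delta=\epsilon/2$, which reproduces exactly the stated $\epsilon$.
\end{itemize}
The difference is the key step. The paper does not build the uniform bound. It quotes Proposition~7.1 of~\cite{rio2017asymptotic}, $\mathbb E[\sup_q m|\hat F_m(q)-F_e(q)|^2\mid\mathcal D_{\mathrm{tr}}]\le(3+8\Gamma)(3+\frac{\log m}{2\log 2})^2$, and that is where both constants come from.

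The step you propose to do yourself would not give this bound as you describe it. Pointwise variances of $\sum_k(\mathbf 1\{e_k\le t\}-F(t))$ at grid points are the wrong object. A union bound of pointwise Chebyshev bounds costs a factor equal to the grid size. Balancing that against the discretization error gives at best an $m^{-1/4}$ rate, not $m^{-1/3}$.

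Chaining has to be done on increments over dyadic cells $I_{j,1},\dots,I_{j,2^j}$ of $F_e$-mass $2^{-j}$. The missing ingredient is a bound on the \emph{sum} of their second moments that is uniform in the level $j$. Per-cell bounds such as $\min\{\alpha_{\mathrm{mix}}(d),2^{-j}\}$, summed over $2^j$ cells, can grow with $j$ when the mixing coefficients are merely summable. The fix is
\[
\sum_k\bigl|\mathrm{Cov}(\mathbf 1\{e_0\in I_{j,k}\},\mathbf 1\{e_d\in I_{j,k}\})\bigr|\le 4\alpha_{\mathrm{mix}}(d).
\]
You get this by attaching independent random signs to the cells and applying the covariance inequality for variables bounded by one to the resulting $\pm1$-valued combinations. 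Together with $\mathbb E\max_k Z_k^2\le\sum_k\mathbb E Z_k^2$, this gives each level an $L^2$ contribution of order $\sqrt{1+8\Gamma}$.

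Then combine the roughly $\log_2\sqrt m$ levels by Minkowski's inequality in $L^2$, handle the finest level by monotonicity of $\hat F_m$ and $F_e$, and apply Markov once. A Chebyshev-plus-union bound across levels, as you propose, loses one more logarithm. It yields $\epsilon$ of order $(\log m)/m^{1/3}$ instead of $(\log m)^{2/3}/m^{1/3}$, so it would not recover the stated constants. The simplest repair is to cite Rio's proposition, as the paper does.
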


The complete proof is deferred to Appendix~\ref{app:proof}. Theorem~\ref{thm:hierarchical_coverage_main} shows that, under Assumption~\ref{assump:mix_scores}, the coverage gap $\epsilon$ decreases at the rate in~\eqref{eq:asymptotic}. Hence, the resulting uncertainty set is asymptotically valid as the size of the calibration set grows.

\section{Solution Strategy}

The tri-level optimization in~\eqref{eq:tri-level} is solved using a two-step approach.
We first reformulate the lower-level recourse problem by linearizing the bilinear terms in the objective, which yields an equivalent mixed-integer linear program (MILP).
The resulting min–max–min problem is then solved using a nested column-and-constraint generation (CCG) procedure~\cite{zhao2012exact,ZENG2013457}.

\subsection{Reformulation of the Lower-Level Recourse Problem}

For fixed planning decisions $(\mathbf{x},\mathbf{y})$ and an ignition scenario vector $\mathbf{u} \in \mathcal{U}$, the lower-level recourse problem induced by~\eqref{eq:tri-level} determines the optimal PSPS actions $\mathbf{z}$:
\begin{equation}
\label{eq:recourse_original}
\begin{aligned} \min_{\mathbf{z}}~&
\sum_{i=1}^n h_i (1 - \beta_i y_i)(u_i -  z_i) \\ 
\text{s.t.}~
& \sum_{i=1}^n~(\gamma_i y_i + \delta_i  z_i) \le W, \\ 
& 0 \le  z_i \le u_i x_i,
\quad \forall i =1,\dots, n. 
\end{aligned}
\end{equation}

The only bilinear term in~\eqref{eq:recourse_original} arises from the product $y_i z_i$ in the objective. To linearize it, we introduce auxiliary variables $w_i = y_i z_i$ with the standard McCormick inequalities
\cite{McCormick1976ComputabilityOG}:
\begin{align*}
& w_i \le  z_i, \quad
w_i \le U_i y_i,\\
& w_i \ge  z_i - U_i(1-y_i), \quad
w_i \ge 0,
\end{align*}
where $U_i$ is a valid upper bound on $z_i$ since $z_i \le u_i x_i$ and $u_i$ is upper bounded by $U_i$.
Thus, $U_i$ provides a tighter alternative to an arbitrary big-$M$ constant.

Substituting $y_i z_i$ with $w_i$ yields the following MILP reformulation of the recourse problem in~\eqref{eq:recourse_original}:
\begin{equation}
\label{eq:recourse_milp}
\begin{aligned} 
\mathcal{P}(\mathbf{x}, \mathbf{y}, \mathbf{u}) \coloneqq \min_{\mathbf{z},\mathbf{w}}~& 
\sum_{i=1}^n 
h_i \big(u_i -  z_i - \beta_i u_i y_i + \beta_i w_i \big)
\\
\text{s.t.}~
& \sum_{i=1}^n
  (\gamma_i y_i + \delta_i  z_i)
  \le W,\\
&  z_i - U_i(1-y_i)\le w_i \le U_i y_i,\\
&  z_i \le u_i x_i,\\
& 0 \le w_i \le  z_i, \quad \forall i = 1, \dots, n.
\end{aligned}
\end{equation}
Because the uncertainty set $\mathcal{U}$ is bounded, the feasible region of~\eqref{eq:recourse_milp} contains finitely many integer recourse decisions $(\mathbf{z},\mathbf{w})$. 

Next we adopt a nested CCG approach to solve the reformulated tri-level optimization problem.

\subsection{Outer Master Problem}

We first assume access to an oracle that solves the following adversarial subproblem for any fixed $(\mathbf{x},\mathbf{y})$:
\[
\mathcal{Q}(\mathbf{x},\mathbf{y}) \coloneqq  \max_{\mathbf{u} \in \mathcal{U}} \mathcal{P}(\mathbf{x},\mathbf{y},\mathbf{u}).
\]
Given $(\mathbf{x},\mathbf{y})$, the oracle returns a worst-case ignition scenario and provides an upper bound on the robust objective. 
In parallel, the outer master problem optimizes the planning decisions over a restricted set of scenarios accumulated during the algorithm, and yields a lower bound.
The algorithm proceeds iteratively by alternating between solving the master problem and invoking the adversarial oracle to identify new worst-case scenarios, which are then added to the scenario set until convergence.

To be specific, the outer master problem approximates the robust objective in~\eqref{eq:tri-level} by optimizing against a finite set of candidate worst-case scenarios which yields a lower bound.
Let $\tilde{\mathcal U}$ denote the set of candidate worst-case scenarios identified in previous iterations.
The algorithm is initialized with a baseline scenario $\mathbf{u}^1$ (\eg, the average number of ignitions).
After $m$ iterations, the scenario set becomes
$\tilde{\mathcal U} = \{\mathbf{u}^1,\dots,\mathbf{u}^m\}$.
We also initialize the lower and upper bounds of the robust objective as
$\mathrm{LB}=-\infty$ and $\mathrm{UB}=+\infty$.

Introducing an epigraph variable $\eta$ to represent the worst-case recourse cost, the master problem (MP) at iteration $m$ is formulated as:
\begin{equation}
\label{eq:outer_master}
\begin{aligned}
\textbf{MP}^m:~\min_{\eta, \mathbf{x},\mathbf{y}}~& \eta\\ 
\text{s.t.}~ 
& \sum_{i=1}^n c_i x_i \le C, \quad 
  \sum_{i=1}^n b_i y_i \le B, \\ 
& \eta \ge \sum_{i=1}^n h_i \big(u_i^j - z_i^j - \beta_i u_i^j y_i + \beta_i w_i^j\big),\\
& \sum_{i=1}^n (\gamma_i y_i + \delta_i z_i^j) \le W,\\
& \quad \forall j=1,\dots,m,\\ 
& y_i \le x_i, \quad \forall i=1,\dots,n, \\
& z_i^j - U_i(1-y_i) \le w_i^j \le U_i y_i,\\
& 0 \le z_i^j \le u_i^j x_i,\\ 
& 0 \le w_i^j \le z_i^j,\\
& \quad \forall i=1,\dots,n,~j=1,\dots,m.\\ 
\end{aligned}
\end{equation}
Solving $\textbf{MP}^m$ yields minimizer
$(\mathbf x^{m},\mathbf y^{m},\eta^{m})$. 
Because $\textbf{MP}^m$ optimizes only against the restricted scenario set $\tilde{\mathcal U}$, it is a relaxation of the full robust problem and therefore provides a valid lower bound, $\mathrm{LB} \leftarrow \eta^{m}$.
The resulting solution $(\mathbf{x}^{m},\mathbf{y}^{m})$ is then used to generate a new worst-case ignition scenario by solving the subproblem defined above, which is added to $\tilde{\mathcal U}$ in the next iteration.

\subsection{Mixed Integer Bi-level Subproblem}

Given the planning decisions $(\mathbf{x}^{m}, \mathbf{y}^{m})$ obtained from the outer master problem at iteration $m$, the algorithm next identifies a worst-case ignition scenario by solving the following subproblem (SP):
\begin{equation}
\label{eq:inner_problem}
    \textbf{SP}^m:~\mathcal{Q}(\mathbf{x}^m,\mathbf{y}^m).
\end{equation}  

To solve problem~\eqref{eq:inner_problem}, we apply a second CCG procedure, which alternates between generating candidate ignition scenarios and identifying optimal recourse actions.
Let $r \ge 1$ denote the iteration index of the inner loop, and let $\tilde{\mathcal R}^m = \{(\mathbf{z}^1, \mathbf{w}^1),\dots,(\mathbf{z}^r, \mathbf{w}^r)\}$ denote the collection of recourse decisions identified.
At $r=1$, we initialize the bounds $\mathrm{LB}_{\mathrm{in}}=-\infty$ and $\mathrm{UB}_{\mathrm{in}}=+\infty$.
We select an arbitrary feasible scenario $\mathbf u^1 \in \mathcal U$ and solve the recourse problem $\mathcal P(\mathbf x^1,\mathbf y^1,\mathbf{u}^1)$.
The resulting optimal recourse action $(\mathbf z^1,\mathbf w^1)$ forms the first element of $\tilde{\mathcal R}^m$.

At iteration $r$, the inner master problem searches for a candidate worst-case ignition scenario by maximizing the objective over the current set of recourse actions:
\begin{align*}
\textbf{MPS}^{m,r}:~\max~& \theta \\ \text{s.t.}~
& \mathbf{u} \in \mathcal{U},\\
& \theta \le \sum_{i=1}^n h_i \Big(u_i -  z_i^l - \beta_i u_i y_i^{m} + \beta_i w_i^l \Big),\\
& \quad \forall l = 1,\dots, r, \\ 
& u_i x_i^{m} \ge  z_i^l,\\
& \quad \forall i \in 1, \dots, n, ~  l = 1,\dots, r.
\end{align*}
Solving above problem yields an optimizer $\mathbf{u}^r$ and an optimal value $\theta^r$. 
Because $\tilde{\mathcal{R}}^m$ is a finite set of recourse actions, it provides a pessimistic estimate of the inner problem~\eqref{eq:inner_problem}, and updates the inner upper bound with $\mathrm{UB}_{\mathrm{in}} \leftarrow \theta^r$.

Next we find the optimal recourse action $(\mathbf{z}^{r+1}, \mathbf{w}^{r+1})$ given $(\mathbf{x}^m,\mathbf{y}^m, \mathbf{u}^r)$ by solving the following inner subproblem
\[
\textbf{SPS}^{m,r}:~
\mathcal P(\mathbf x^m,\mathbf y^m,\mathbf u^r).
\]
This provides a lower bound to the subproblem~\eqref{eq:inner_problem}:
$\mathrm{LB}_{\mathrm{in}} \leftarrow \max\bigl\{\mathrm{LB}_{\mathrm{in}}, \mathcal P(\mathbf x^m,\mathbf y^m,\mathbf u^r)\bigr\}$.
If the gap of $\mathrm{UB}_{\mathrm{in}}$ and $\mathrm{LB}_{\mathrm{in}}$ is smaller than some thresholds, the inner algorithm terminates and adds the worst-case scenario $u^r$
to the outer problem $\tilde{\mathcal{U}}$. Otherwise, the new recourse action $(\mathbf z^{r+1},\mathbf w^{r+1})$ is added to $\tilde{\mathcal R}^m$, and the inner master is resolved iteratively.

Since the planning decisions $(\mathbf{x}^m,\mathbf{y}^m)$ remain fixed during the inner loop, the converged objective value of the inner problem provides a pessimistic estimate of the robust objective in
\eqref{eq:tri-level}. Therefore, upon convergence of the inner loop we update the outer upper bound: $\mathrm{UB} \leftarrow \min\{\mathrm{UB},\mathcal \theta^r\}$.
The outer algorithm then continues with the updated scenario set. When the gap between $\mathrm{UB}$ and $\mathrm{LB}$ falls below a prescribed tolerance, the algorithm terminates with an optimal solution to~\eqref{eq:tri-level}.

\begin{algorithm}[!t]
\caption{Nested CCG Algorithm for Solving~\eqref{eq:tri-level}}
\label{alg:ccg}
\begin{algorithmic}[1]
\STATE \textbf{Input:} Tolerances $\varepsilon_{\mathrm{out}}, \varepsilon_{\mathrm{in}}$; initial scenario $\mathbf{u}^{1} \in \mathcal{U}$.
\STATE \textbf{Initialize:} $\tilde{\mathcal{U}} \leftarrow \{\mathbf{u}^{1}\}$; $m \leftarrow 1$; $\mathrm{LB} \leftarrow -\infty$; $\mathrm{UB} \leftarrow +\infty$.
\STATE \texttt{// Outer CCG loop}

\WHILE{$\mathrm{UB} - \mathrm{LB} > \varepsilon_{\mathrm{out}}$}
    \STATE Solve $\textbf{MP}_m$ and obtain $(\mathbf{x}^m, \mathbf{y}^m, \eta^m)$; $\mathrm{LB} \leftarrow \eta^m$;\\
    \STATE Solve~$\mathcal{P}(\mathbf{x}^m, \mathbf{y}^m, \mathbf{u}^1)$ and obtain $(\mathbf{z}^1,\mathbf{w}^1)$; \\
    \STATE $\tilde{\mathcal{R}}^m \leftarrow \{(\mathbf{z}^1, \mathbf{w}^1)\}$; $r \leftarrow 1$;\\
    \STATE $\mathrm{LB}_{\mathrm{in}} \leftarrow -\infty$; $\mathrm{UB}_{\mathrm{in}} \leftarrow +\infty$;
    \STATE \texttt{// Nested CCP loop}
    \WHILE{$\mathrm{UB}_{\mathrm{in}} - \mathrm{LB}_{\mathrm{in}} > \varepsilon_{\mathrm{in}}$}
        \STATE Solve $\textbf{MPS}^{m,r}$ and obtain $(\mathbf{u}^r, \theta^r)$; $\mathrm{UB}_{\mathrm{in}} \leftarrow \theta^r$;
        \STATE Solve $\mathcal{P}(\mathbf{x}^m, \mathbf{y}^m, \mathbf{u}^r)$ and obtain $(\mathbf{z}^{r+1}, \mathbf{w}^{r+1})$;
        \STATE $\mathrm{LB}_{\mathrm{in}} \leftarrow \max\{\mathrm{LB}_{\mathrm{in}}, \mathcal{P}(\mathbf{x}^m, \mathbf{y}^m, \mathbf{u}^r)\}$;
        \STATE $\tilde{\mathcal{R}}^m \leftarrow \tilde{\mathcal{R}}^m \cup \{(\mathbf{z}^{r+1}, \mathbf{w}^{r+1})\}$; $r \leftarrow r + 1$;
    \ENDWHILE
    \STATE $\mathrm{UB} \leftarrow \min\{\mathrm{UB}, \theta^r\}$;
    \STATE $\tilde{\mathcal{U}} \leftarrow \tilde{\mathcal{U}} \cup \{\mathbf{u}^{r}\}$; $m \leftarrow m + 1$;
    \IF{$\mathrm{UB} - \mathrm{LB} \le \varepsilon_{\mathrm{out}}$}
        \STATE \textbf{break}
    \ENDIF
\ENDWHILE
\STATE $(\mathbf{x}^\star, \mathbf{y}^\star, \mathbf{z}^\star) \gets (\mathbf{x}^m, \mathbf{y}^m, \mathbf{z}^r)$.
\RETURN $(\mathbf{x}^\star, \mathbf{y}^\star, \mathbf{z}^\star)$.
\end{algorithmic}
\end{algorithm}

\subsection{Convergence Guarantee}
The outer master problem~\eqref{eq:outer_master} yields a valid lower bound, while the solution of the subproblem~\eqref{eq:inner_problem} provides a corresponding upper bound. For any fixed planning decision $(\mathbf{x}, \mathbf{y})$, the recourse problem~\eqref{eq:recourse_milp} is a bounded MILP, implying that the inner CCG procedure can generate only finitely many distinct recourse cuts. Consequently, the nested algorithm terminates in a finite number of iterations and solves the subproblem exactly. These observations lead to the following result establishing the finite termination of Algorithm~\ref{alg:ccg}.
\begin{corollary}[Finite Convergence of Nested CCG~\cite{ZENG2013457}]
For a two-stage robust optimization problem with bounded mixed-integer recourse, the nested column-and-constraint generation algorithm converges to an optimal solution in a finite number of iterations.
\end{corollary}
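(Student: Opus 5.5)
The plan is to prove finite convergence in two layers: first the inner CCG loop for a fixed outer iterate $(\mathbf x^m,\mathbf y^m)$, then the outer loop. Both arguments rest on the same pigeonhole principle. Each loop either certifies optimality or generates an element of a finite set that has not been generated before.

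\textbf{Finite ground sets.} First I will establish that all relevant sets are finite.
\begin{itemize}
\item The planning set $\{(\mathbf x,\mathbf y)\in\{0,1\}^{2n}\}$ satisfying~\eqref{eq:tri-level-a}, \eqref{eq:tri-level-b} and $y_i\le x_i$ is finite.
\item Because $\mathcal U$ is bounded by the $U_i$ in~\eqref{eq:bounds} and $\mathbf z\in\mathbb Z_+^n$ with $z_i\le U_i$, the set $\mathcal Z$ of integer recourse actions is finite.
\item For a given integer $\mathbf z$ and binary $\mathbf y$, the McCormick constraints force $w_i=y_iz_i$ exactly. So the recourse pairs $(\mathbf z,\mathbf w)$ also range over a finite set.
\end{itemize}

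\textbf{Inner loop.} I will show that if the inner algorithm generates a recourse action $(\mathbf z^{r+1},\mathbf w^{r+1})$ already in $\tilde{\mathcal R}^m$, then $\mathrm{UB}_{\mathrm{in}}=\mathrm{LB}_{\mathrm{in}}$.
\begin{itemize}
\item By construction, $\theta^r\ge\mathcal P(\mathbf x^m,\mathbf y^m,\mathbf u^r)$ always holds, since $\textbf{MPS}^{m,r}$ is a relaxation of $\mathcal Q$ (it restricts the inner minimisation to a subset of actions).
\item If the optimal action at $\mathbf u^r$ is already enumerated, then $\theta^r\le$ the objective of that action at $\mathbf u^r$, which equals $\mathcal P(\mathbf x^m,\mathbf y^m,\mathbf u^r)\le\mathrm{LB}_{\mathrm{in}}$.
\item Hence either the loop terminates, or a new element of the finite set is added. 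This gives at most $|\mathcal Z|+1$ iterations, and at termination $\theta^r=\mathcal Q(\mathbf x^m,\mathbf y^m)$.
\end{itemize}

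\textbf{Outer loop.} Symmetrically, $\textbf{MP}^m$ relaxes the full problem, so $\eta^m$ is a valid lower bound. Also $\mathcal Q(\mathbf x^m,\mathbf y^m)$ is a valid upper bound.
\begin{itemize}
\item Suppose the scenario $\mathbf u^r$ returned by the subproblem is already in $\tilde{\mathcal U}$. Then the master's constraints for that scenario give $\eta^m\ge\mathcal P(\mathbf x^m,\mathbf y^m,\mathbf u^r)=\mathcal Q(\mathbf x^m,\mathbf y^m)\ge\mathrm{UB}$, so the gap closes.
\item Otherwise a new scenario is added.
\end{itemize}

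\textbf{Main obstacle.} The hard step is that $\mathcal U$ is a continuous polytope, not a finite set, so the pigeonhole argument cannot use scenarios directly. I would handle this by arguing over the finite set of planning decisions instead. Once a pair $(\mathbf x^m,\mathbf y^m)$ repeats, its worst-case scenario is already in $\tilde{\mathcal U}$, so the master value at that pair is at least $\mathcal Q(\mathbf x^m,\mathbf y^m)\ge\mathrm{UB}$ and the gap closes. This yields at most $|\{(\mathbf x,\mathbf y)\}|$ outer iterations.

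A subtlety is whether a worst-case scenario exists and can be taken among finitely many points. I would settle this by noting that for each fixed $\mathbf z$ the objective is affine in $\mathbf u$. Then $\mathcal Q$ is a maximum over a compact set of a minimum of finitely many affine functions. This is attained, and $\textbf{MPS}$ optimizers can be chosen at vertices of finitely many polyhedra, as in~\cite{ZENG2013457}.

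The final optimality claim then follows: the algorithm stops with $\mathrm{LB}=\mathrm{UB}$, and the returned $(\mathbf x^\star,\mathbf y^\star)$ attains the robust optimum.
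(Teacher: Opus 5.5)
Your finiteness bookkeeping is sound and is a more explicit version of the paper's sketch. The finite set $\mathcal Z$, the exact McCormick linearization $w_i=y_iz_i$, ``a repeated recourse action closes the inner gap'', and pigeonholing over the finite set of planning pairs for the outer loop (valid given an exact oracle) all hold. The gap is in the claims that turn termination into optimality. You assert that $\textbf{MPS}^{m,r}$ is a relaxation of $\mathcal Q(\mathbf x^m,\mathbf y^m)$ because it ``restricts the inner minimisation to a subset of actions''. You also assert that $\mathcal Q$ is a maximum of a minimum of finitely many affine functions, hence attained. Both claims ignore that the recourse feasible set depends on $\mathbf u$ through $z_i\le u_ix_i$ with $z_i$ integer. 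The bound $\mathcal P(\mathbf x,\mathbf y,\mathbf u)\le\sum_i h_i(1-\beta_iy_i)(u_i-z_i^l)$ holds only on $\{\mathbf u: u_ix_i\ge z_i^l~\forall i\}$. $\textbf{MPS}^{m,r}$ imposes exactly these constraints, so it shrinks the adversary's set instead of relaxing the inner problem. Moreover, $\mathcal P(\mathbf x,\mathbf y,\cdot)$ drops discontinuously whenever some $u_i$ with $x_i=1$ reaches an integer. It is therefore only lower semicontinuous, and its supremum over $\mathcal U$ need not be attained. The vertex argument you borrow applies to the LP $\textbf{MPS}$, not to $\mathcal Q$.

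A one-segment instance shows the step fails. Take $\mathcal U=[0,1.5]$, $x=1$, $y=0$, $h=\delta=W=1$. Then $\mathcal P(u)=u$ on $[0,1)$ and $\mathcal P(u)=u-1$ on $[1,1.5]$, so $\mathcal Q=1$, and this value is not attained. Starting from $u^1=1.5$ gives $z^1=1$. Then $\textbf{MPS}^{m,1}$ forces $u\ge 1$ and returns $u=1.5$, $\theta=0.5=\mathcal P(1.5)$, so the inner loop stops with $\mathrm{UB}_{\mathrm{in}}=\mathrm{LB}_{\mathrm{in}}=0.5$, even though $\mathcal P(0.9)=0.9$. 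This is not merely an attainment issue. Dropping the constraint $ux\ge z^1$ does not help either, since the cut $\theta\le u-1$ is then invalid for $u<1$.

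Hence your conclusion $\theta^r=\mathcal Q(\mathbf x^m,\mathbf y^m)$ at inner termination is false, and $\mathrm{UB}$ is not a valid upper bound. The outer step ``its worst-case scenario is already in $\tilde{\mathcal U}$'' then has nothing to refer to. What your argument actually establishes is finite termination, not optimality.

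The paper's own justification (finitely many recourse cuts plus the citation) has the same blind spot. The standard nested-CCG analysis needs every enumerated recourse action to yield a cut valid for all $\mathbf u\in\mathcal U$, which fails here. To obtain a true statement you would have to change the setting. One option is to restrict $\mathcal U$ to integer points, which is natural for counts and makes attainment trivial. You would then also use inner-master cuts that stay valid when $\mathbf z^l$ becomes infeasible. For example, evaluate the truncated action $\min\{z_i^l,u_ix_i\}$, which remains reliability-feasible since $\delta_i\ge 0$, at the cost of binary variables in the adversarial problem.
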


\section{Synthetic Data Example}
\label{sec:synthetic}
In this section, we present a controlled synthetic study to evaluate the proposed uncertainty set construction against baseline methods. 
The results show that the proposed method consistently achieves the desired coverage with tighter uncertainty sets across a wide range of settings, whereas the baselines either under-cover or become overly conservative.

We compare the proposed method against three baselines that target the same desired coverage rate $1-\alpha$ for the uncertainty set $\mathcal{U}$ with only segment-level bounds:
($i$) \emph{Bonferroni} evenly allocates the target miscoverage level $\alpha$ across all segment-level constraints, and constructs each lower and upper bound using the corresponding split-conformal quantiles of the absolute residuals.
($ii$) \emph{Max-Rank} is an alternative correction procedure for simultaneous coverage that improves conservatism of Bonferroni, but it requires exchangeability of the nonconformity scores~\cite{timans2025maxrank}.
($iii$) \emph{Confidence Interval (C.I.)} also allocates miscoverage level $\alpha$ evenly across segment-level constraints, and estimates each threshold from the empirical mean and standard deviation of the segment-level absolute calibration residuals under a normality assumption.

To reflect the temporal dependence observed in real weather data, we generate a scalar weather covariate $v_k$ shared across all segments from a stationary autoregressive process: $v_k=\rho v_{k-1}+\xi_k$, where $\xi_k\sim\mathcal N(0,\sigma^2)$ and $|\rho|<1$ for stationarity. 
Heterogeneous weather sensitivity parameter $\kappa_i$ are assigned to segments.
Conditional on the scalar covariate $v_{ik}$ and noise term $\varepsilon_{ik}$, the ignition count on segment $i$ during interval $[t_k,t_{k+1}]$ is generated as
$
u_{ik} \sim \mathrm{Poisson}(\lambda_{ik})$,
$
\lambda_{ik} = \mu\exp(\kappa_i v_{ik} + \varepsilon_{ik}),
$
where $\mu$ controls the baseline ignition intensity. 
These simulated ignition counts are then used to train a linear Poisson model for segment- and group-level nonconformity scores. Details are provided in Appendix~\ref{app:syn}.

\begin{figure}[t]
  \centering
  \begin{subfigure}{\linewidth}
    \centering
    \includegraphics[width=\linewidth]{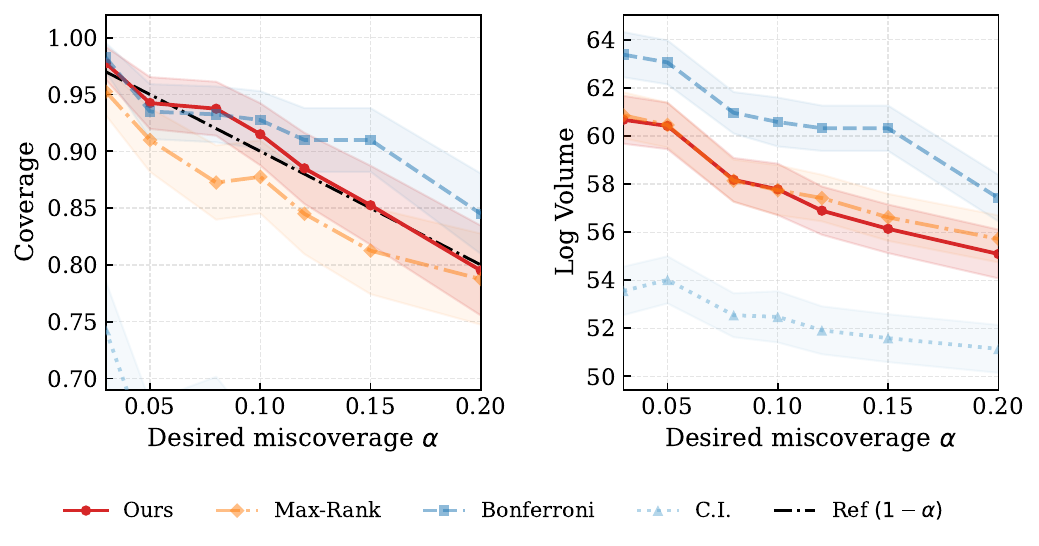}
    \caption{Empirical coverage under varying desired miscoverage rate $\alpha$.}
    \label{fig:sub1}
  \end{subfigure}
  \begin{subfigure}{\linewidth}
    \centering
    \includegraphics[width=\linewidth]{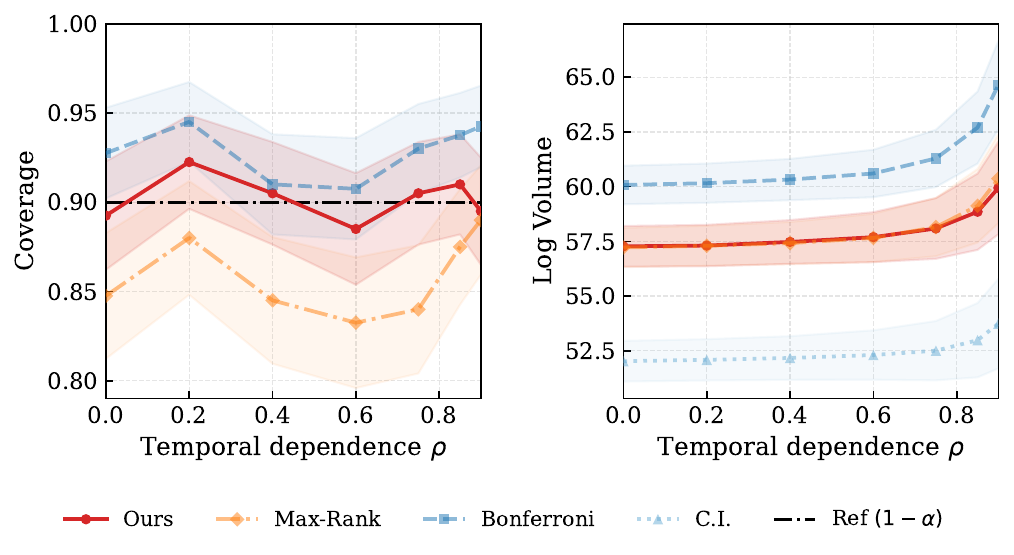}
    \caption{Size of uncertainty set under varying temporal dependence $\rho$.}
    \label{fig:sub2}
  \end{subfigure}
  \caption{Synthetic results across hyperparameters.}
  \label{fig:synthetic_results_combined}
\vspace{-.1in}
\end{figure}

Synthetic results reveal that the proposed uncertainty set construction produces valid and tight prediction intervals under a range of settings. 
Fig.~\ref{fig:synthetic_results_combined} summarizes the synthetic results under two hyperparameter settings. We observe that the proposed method consistently achieves empirical coverage rates above the reference line, $1-\alpha$, across all settings.
This highlights its validity in attaining desired coverage rate in segment- and group-levels. 
In contrast, the Max-Rank and confidence interval fail to attain the target coverage in several settings, reflecting violations of the assumptions underlying these methods in the synthetic data. 
Although the Bonferroni method can yield more conservative coverage than our approach, this conservatism comes at the cost of substantially wider conformal bounds, which can in turn degrade downstream decision quality.

\begin{table*}[t]
\centering
\caption{
Case study results for tri-level robust planning under different uncertainty-set construction methods. The target miscoverage level is $\alpha = 0.6$. Entries are reported as mean (standard deviation) over 50 repeated experiments. 
}
\label{tab:grouping_results_semi_synthetic}
\resizebox{\textwidth}{!}{%
\begin{tabular}{lccccc ccc}
\toprule
& \multicolumn{5}{c}{Optimization outcome}
& \multicolumn{3}{c}{Evaluation} \\
\cmidrule(lr){2-6} \cmidrule(lr){7-9}
Method
& \thead{Pred.\ w.\\(circuit)}
& \thead{Pred.\ w.\\(group)}
& $\|\mathbf{x}^\star\|_1$
& $\|\mathbf{y}^\star\|_1$
& $\|\mathbf{z}^\star\|_1$
& $\|\mathbf{z}^\text{true}\|_1$
& \thead{Emp. \\Coverage}
& \thead{Cost \\($\downarrow$ $\times 10^5$)} \\
\midrule
Planning-only & / & / & 321.00 (0.00) & 321.00 (0.00) & / & 0.06 (0.24) & / & 13.45 (7.49) \\
Co-Optimized & / & / & 138.04 (12.33) & 4.16 (4.02) & 133.88 (12.55) & 50.50 (16.70) & / & 12.98 (8.15) \\
\midrule
C.I. & 3.28 (0.28) & / & 150.64 (19.04) & 2.68 (4.15) & 147.96 (18.50) & 52.54 (20.30) & 0.16 & 13.22 (7.63) \\
Bonf. & 5.89 (0.93) & / & 163.32 (20.70) & 5.10 (9.07) & 155.72 (30.80) & 52.82 (20.36) & 0.58 & 13.53 (7.39) \\
Max-Rank. & 4.22 (0.46) & / & 163.32 (20.70) & 5.10 (9.07) & 158.22 (21.83) & 52.82 (20.36) & 0.26 & 13.53 (7.39) \\
Ours (fixed groups) & 4.26 (0.33) & 91.81 (7.25) & 236.54 (28.74) & 1.52 (2.25) & 46.88 (15.79) & 69.54 (23.16) & 0.32 & \textbf{8.16 (4.92)} \\
Ours (random groups) & 3.85 (0.25) & 87.24 (6.71) & 204.34 (23.29) & 1.00 (0.85) & 44.24 (13.36) & 53.68 (18.15) & 0.40 & \underline{8.22 (5.00)} \\
\bottomrule
\end{tabular}
}
\vspace{-.1in}
\end{table*}

\section{Case Study: California Distribution System}
This section presents a semi-synthetic case study based on the California electric distribution system. The results show that our framework improves planning decision quality by producing tighter yet valid uncertainty sets and more effective sectionalization and fast-trip configuration planning.

\subsection{Dataset and Model Setup}
Our case study focuses on ignition risks at the distribution circuit level. We use circuit topology dataset containing $3,091$ distribution circuit topologies across California, which covers the majority of a major IOU's service territory.
We incorporate utility-reported circuit ignition records maintained by the California Public Utilities Commission~\cite{cpuc_wildfire_page} to the circuits.
To characterize meteorological drivers of ignition risk, we collect $17$ hourly environmental covariates from NOAA’s High-Resolution Rapid Refresh (HRRR) model~\cite{noaa_hrrr}, elevation data from the U.S. Geological Survey~\cite{usgs_3dep}, and vegetation index through Google Earth Engine~\cite{gorelick2017gee}. 
Together, these data provide high-resolution information that enables circuit-level ignition risk prediction and uncertainty quantification for downstream sectionalization and fast-trip planning.

\begin{figure}[t]
  \centering
  \includegraphics[width=\linewidth]{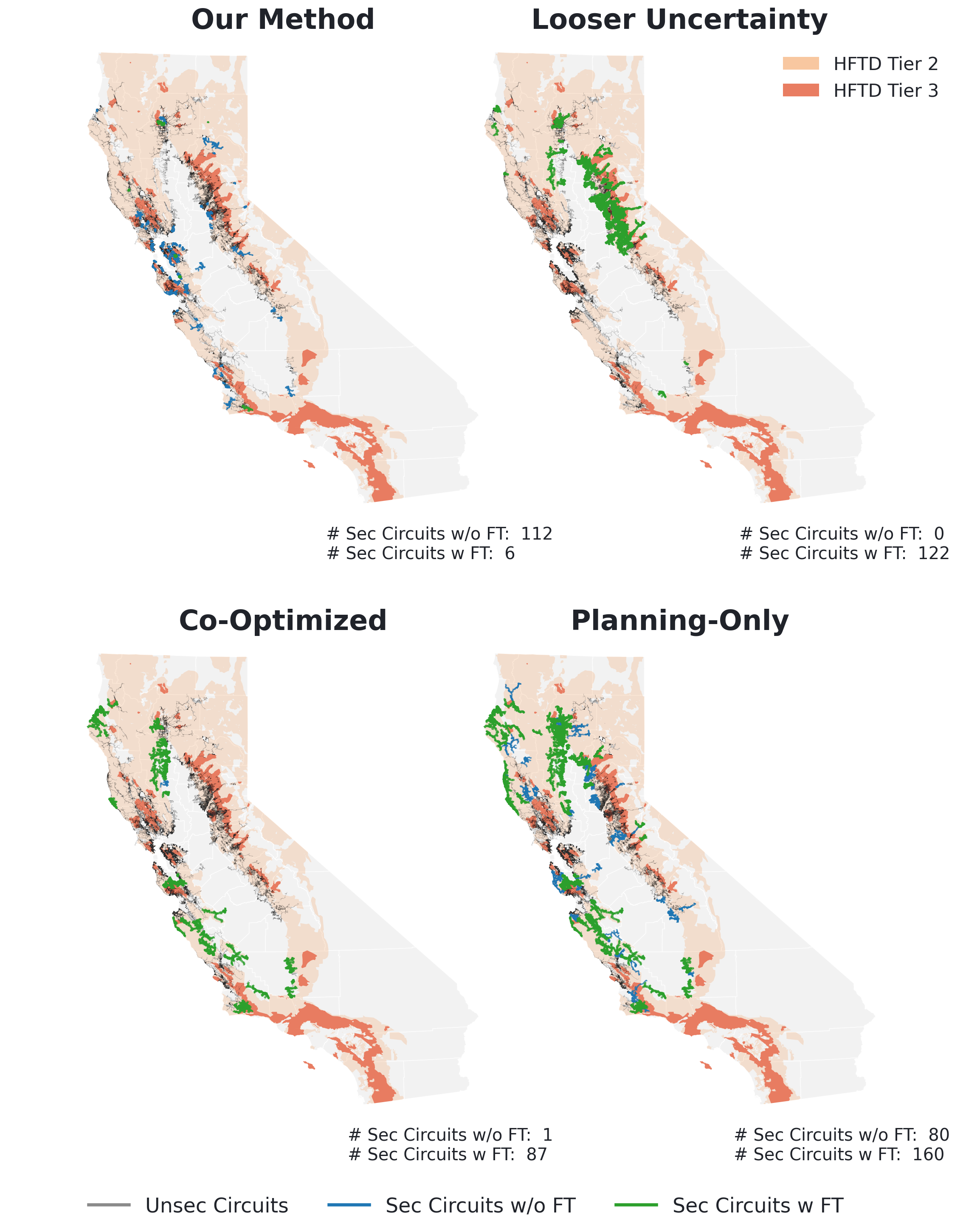}
  \caption{Overview of the optimal sectionalization and fast-trip configuration outcome in the California case study. The four panels compare the planning decisions produced by our method, a looser uncertainty set baseline (Bonferroni), Co-Optimized method, and Planning-Only method.}
  \vspace{-.1in}
  \label{fig:circuit_level_optimization_map}
\end{figure}

To enrich the temporal dimension of the dataset, we construct a semi-synthetic data generator from the observed weather and ignition records.
Following a common approach in meteorological research, we first reduce the dimension of the high-dimensional environmental features using principal component analysis (PCA)~\cite{jolliffe2002principal}, and then fit a vector autoregressive (VAR)~\cite{10.1257/jep.15.4.101} model to the retained component scores to capture the dominant temporal dependence structure~\cite{sparks,zanardo2019floods}. We then simulate the fitted VAR forward to generate synthetic principal-component trajectories, map them back to the original weather-feature space, and convert the resulting environmental features into ignition outcomes using the fitted ignition model from the real data. 
Details are provided in Appendix~\ref{app:semi_synth_generator}.

In this case study, the hyperparameters of the proposed model are specified to admit a realistic operational interpretation.
In particular, let $h_i$ denote the number of customers served by circuit $i$, so that the objective measures ignition consequences by the customers potentially affected. Since circuit-level customer counts are not directly observed, we approximate $h_i$ using the smoothed population of census tracts near circuit $i$~\cite{census2020_redistricting}.
The reliability constraint~\eqref{eq:tri-level-c} can be interpreted as a bound on the System Average Interruption Frequency Index (SAIFI) due to de-energization over the planning year. 
SAIFI measures the average number of sustained service interruptions experienced by a customer during a reporting year, and is a standard metric for evaluating electric service reliability~\cite{cpuc_reliability_annual_reports}.
Specifically, let $\zeta_i$ denote the expected number of fast-trip interruptions on circuit $i$ during the planning year. Recall that $z_i$ denotes the number of PSPS de-energization events on circuit $i$. Then the expected number of interruptions experienced by customers served by circuit $i$ is $z_i+\zeta_i y_i$, and the resulting SAIFI is
$
{\sum_i h_i \bigl(z_i+\zeta_i y_i\bigr)}/{\sum_i h_i}.
$
Defining $\delta_i \coloneqq h_i / \sum_i h_i$ and $\gamma_i \coloneqq h_i\zeta_i / \sum_i h_i$, the reliability constraint in~\eqref{eq:tri-level-c} can be written as
\[
\sum_i \left(\gamma_i y_i+\delta_i z_i\right)
=
\frac{\sum_i h_i \bigl(z_i+\zeta_i y_i\bigr)}{\sum_i h_i}
\le W,
\]
which enforces an upper bound $W$ on SAIFI. Following industry convention~\cite{cpuc_reliability_annual_reports}, we vary $W$ around 1 in the following experiments. Similarly, we set $b_i = 1/n$ and $c_i = 1/n$ for all circuits, so that budgets $B$ and $C$ represent the fractions of circuits that can be sectionalized and configured for fast-trip protection, respectively, and therefore both lie in $[0,1]$.

\begin{figure}[t]
  \centering
  \begin{subfigure}{\linewidth}
    \centering
    \includegraphics[width=\linewidth]{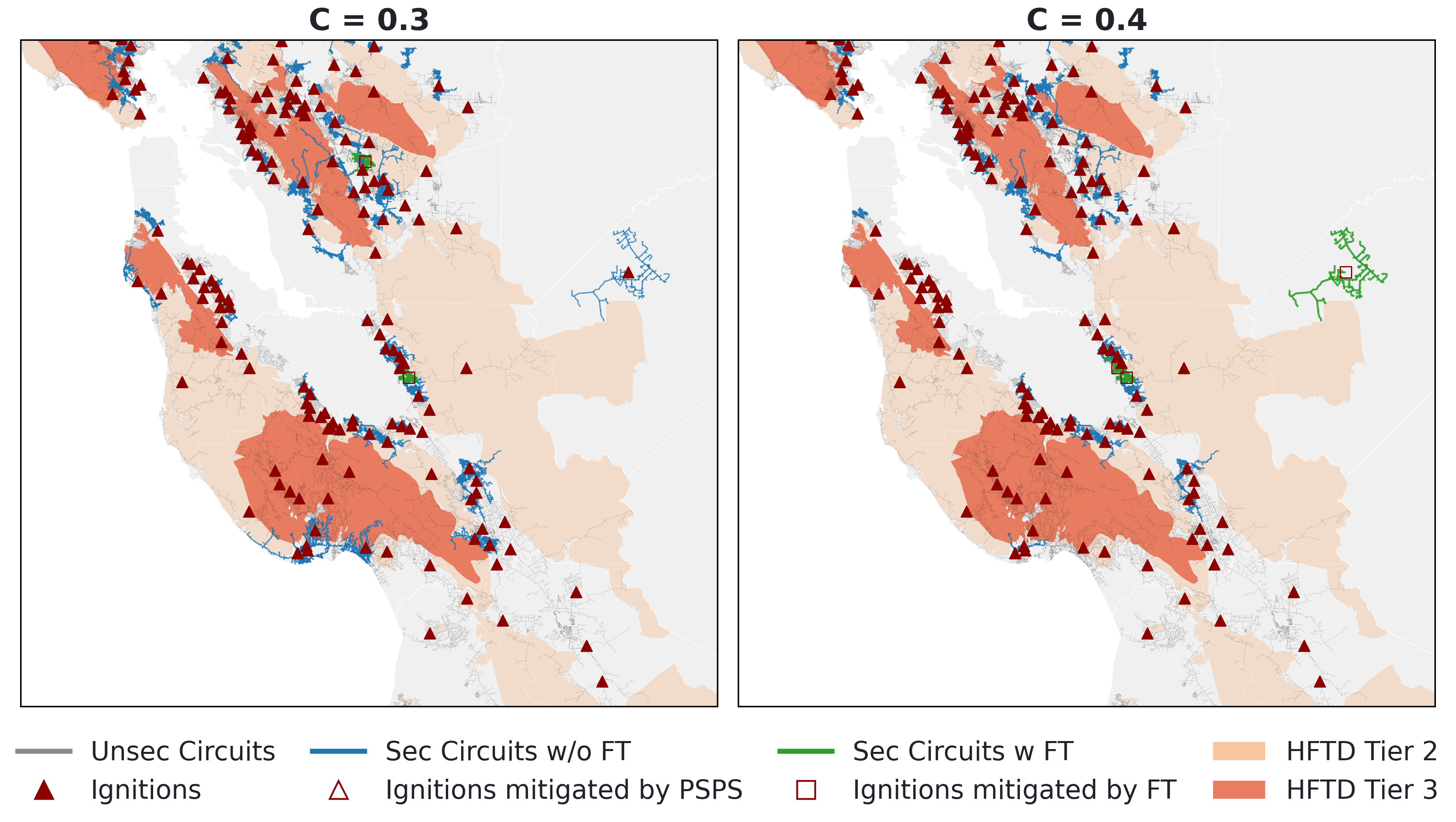}
    \caption{Varying sectionalization budget $C$.}
    \label{fig:sub_1}
  \end{subfigure}
  \begin{subfigure}{\linewidth}
    \centering
    \includegraphics[width=\linewidth]{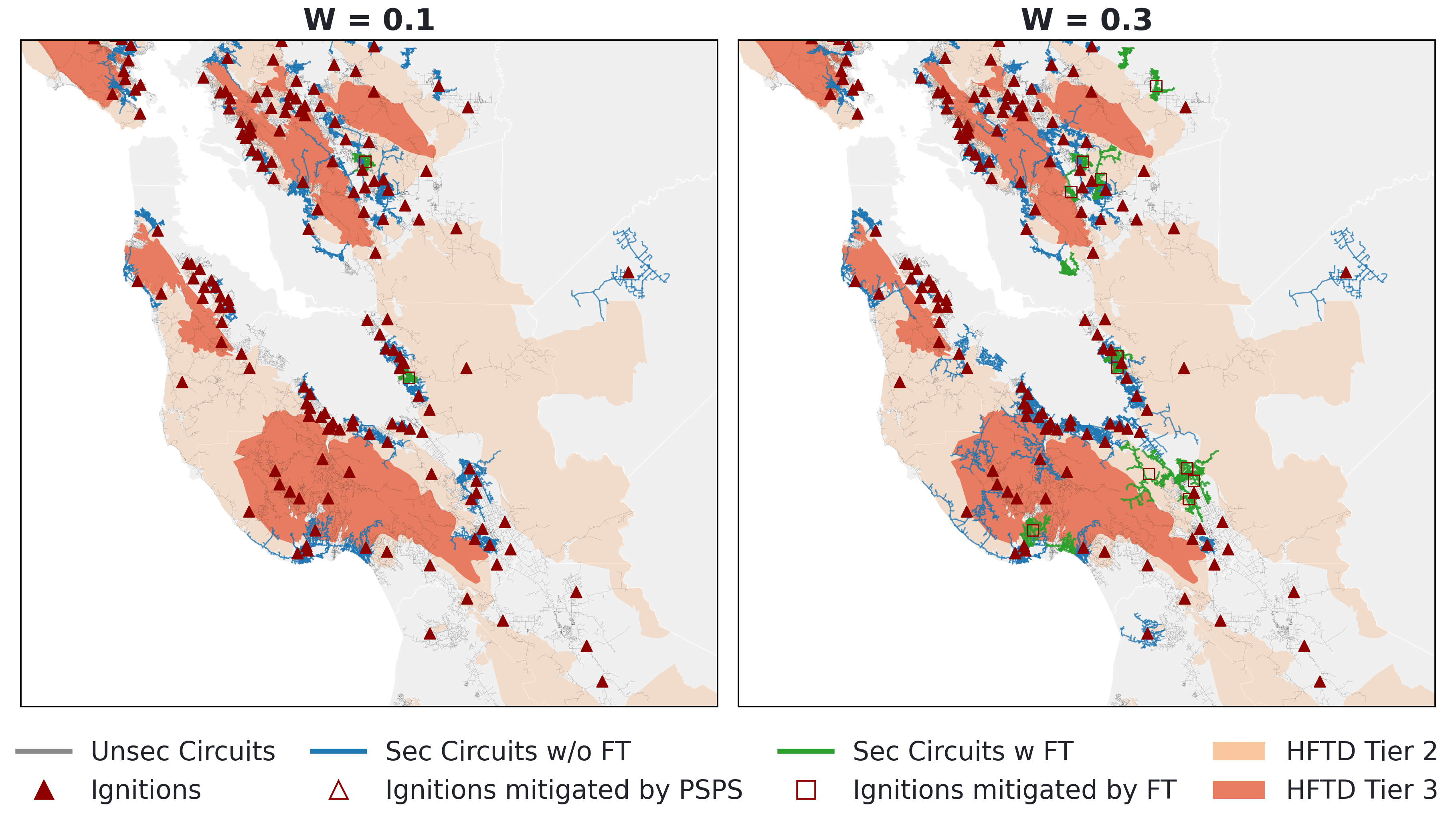}
    \caption{Varying reliability constraint $W$.}
    \label{fig:sub_2}
  \end{subfigure}
  \caption{California case study results under different planning parameters. The maps provide a closer view of sectionalization and fast-trip decisions in Santa Barbara, a high-risk, high-population-density area of California, illustrating how the spatial configuration changes as the sectionalization budget $C$ and reliability constraint $W$ vary. Results for the fast-trip budget $B$ are omitted due to similarity to varying $C$.}
  \label{fig:hyper}
\vspace{-.1in}
\end{figure}

With the uncertainty set $\mathcal U$ and parameters $W$, $B$, and $C$ fixed, we implement Algorithm~\ref{alg:ccg} in Python with \textsc{Gurobi}~11.0~\cite{gurobi}. Each outer iteration solves a first-stage MILP, and each inner loop alternates between a worst-case LP over $\mathcal U$ and a recourse MILP until convergence. We set $\varepsilon_{\mathrm{out}}=\varepsilon_{\mathrm{in}}=10^{-5}$.

\subsection{Results and Implications}
Table~\ref{tab:grouping_results_semi_synthetic} reports the case-study results comparing different baseline methods. 
We assess their out-of-sample performance from two perspectives: ($i$) We evaluate the empirical coverage of $\mathcal U$ over the planning period.
($ii$) We fix the first-stage decisions given by each method, and recompute the optimal operational response to face the ignition in the held-out planning period by solving the recourse problem \eqref{eq:recourse_milp}, where $\|\mathbf{z}^{\text{true}}\|_1$ denotes the resulting number of PSPS actions. 
We then report the corresponding cost obtained.
In addition to the baselines described in Section~\ref{sec:synthetic}, we consider a \emph{Planning-Only} benchmark that only solves the outer sectionalization and fast-trip planning problem in~\eqref{eq:tri-level}, and a \emph{Co-Optimized} benchmark that removes the worst-case layer in~\eqref{eq:tri-level} and solves a joint minimization problem for planning and recourse problems.
For the proposed method, we implement it using randomly generated circuit groups, and we also report a variant based on the IOU’s official five-region service-territory partition as a fixed grouping alternative with the same number of groups.
Details of compared baselines are provided in Appendix~\ref{app:opt}.

Results in Table~\ref{tab:grouping_results_semi_synthetic} shows the proposed uncertainty sets achieve empirical coverage at the desired level $1-\alpha=0.4$, consistent with both the synthetic study results and Theorem~\ref{thm:hierarchical_coverage_main}. 
In contrast, the circuit-only baselines produce uncertainty sets that are either miscalibrated or overly conservative.
For example, the Bonferroni method attains higher empirical coverage of $0.58$ with substantially wider circuit-level bounds, indicating conservatism, whereas the other circuit-only baselines fail to reach the target miscoverage level.
This confirms that the proposed construction yields uncertainty sets that are both valid and less conservative in this application.
This reduction in uncertainty set conservatism directly results in better planning outcomes. Both variants of our method achieve substantially lower realized cost than all baselines evaluated empirically. In particular, compared with Planning-Only benchmark, our method with random grouping reduces cost by $38.88\%$.
Moreover, our method produces PSPS decisions that are much closer to the that solved in the planning period, whereas the other methods tend to overestimate the amount of PSPS required.

Fig.~\ref{fig:circuit_level_optimization_map} provides a spatial view of the differences in Table~\ref{tab:grouping_results_semi_synthetic}. 
Under our method, the selected sectionalization and fast-trip actions are concentrated in the CPUC-designated High Fire-Threat Districts (HFTD) within the IOU's service territory~\cite{cpuc_hftd_map}, showing that the model prioritizes planning resources where operational flexibility is most valuable under wildfire risk. 
At the same time, our method recommends sectionalization without excessive reliance on fast-trip protection, instead addressing realized ignitions through adaptive operational actions, namely PSPS. 
By contrast, the baseline methods deploy fast-trip much more broadly over sectionalized circuits, reflecting either overly conservative risk estimates or less effective planning decisions. 
Accordingly, Table~\ref{tab:grouping_results_semi_synthetic} shows that the Bonferroni and Max-Rank methods install roughly five times as many fast-trip configurations as our method.

Intuitively, this contrast follows from \eqref{eq:tri-level}--\eqref{eq:tri-constraints}.
Fast-trip protection $y_i$ lowers loss directly through $(1-\beta_i y_i)$, so if $\mathcal U$ is overly conservative and inflates the worst-case value of $u_i$, the model tends to install fast-trip broadly \textit{ex ante}.
By contrast, sectionalization without fast-trip is valuable only when it enables adaptive PSPS and ignition risk materializes.
To see this, for a sectionalized segment with $x_i=1$, marginal fast-trip reduces worst-case loss by approximately $h_i\beta_i u_i$, whereas PSPS reduces it by at most $h_i$.
Thus, larger worst-case values of $u_i$ make fast-trip relatively more attractive.
At the same time, fast-trip and PSPS share the same reliability budget in \eqref{eq:tri-level-c}, so a sharper uncertainty set makes it preferable to preserve that budget for targeted recourse rather than spend it on broad fast-trip deployment.
This explains why our method, with a less conservative uncertainty set, favors more selective sectionalization over excessive fast-trip protection.

Fig.~\ref{fig:hyper} further illustrates the effects of varying the planning parameters $C$ and $W$ with our method. As the sectionalization budget $C$ or the reliability limit $W$ increases, our model selects more sectionalized circuits, creating greater operational flexibility to manage ignitions during the planning period through targeted PSPS or fast-trip configuration.

\subsection{Managerial Insights}
Table~\ref{tab:grouping_results_semi_synthetic} and Fig~\ref{fig:circuit_level_optimization_map} also reveal several managerial insight. 
First, the results highlight the value of jointly optimizing long-term planning and short-term operations. Planning decisions that may appear less effective when evaluated in isolation can deliver substantially greater system benefits once their operational flexibility under severe wildfire conditions is taken into account.
Second, the results suggest that sharper uncertainty quantification can meaningfully change the preferred mitigation strategy. In particular, tighter and valid uncertainty bounds tend to direct more investment toward network sectionalization, which improves operational flexibility, rather than relying primarily on broad preventive shutoffs or widespread fast-trip activation. 
For example, compared to the baselines, our method produces PSPS decisions that are much closer to those in the planning stage, whereas competing methods tend to overestimate the number of PSPS needed in the future period.

\section{Conclusion}
This study proposed an adaptive robust optimization framework for wildfire-resilience planning in electric distribution systems. 
The framework integrates infrastructure planning, uncertain ignition-risk realizations, and adaptive operational responses through a tri-level optimization model. 
To address the high dimensionality of ignition risk, we constructed a data-driven uncertainty set that combines segment- and group-level conformal intervals. 
Synthetic experiments show that this construction provides valid yet less conservative uncertainty quantification than baseline approaches.

The case study further highlighted the value of coordinated sectionalization and operational mitigation under reliability constraints. 
More broadly, the results underscored the benefits of jointly optimizing long-term planning with short-term recourse actions. 
Our findings also suggested that targeted PSPS, when supported by strategic sectionalization and accurate risk quantification, can remain an effective wildfire-mitigation tool relative to fast-trip deployment.

\bibliographystyle{IEEEtran}
\bibliography{ref}

\appendix

\subsection{Proof of Theorem~\ref{thm:hierarchical_coverage_main}}
\label{app:proof}
\begin{proof}
Let $ m\coloneqq |\mathcal D_{\mathrm{cal}}|$ and the empirical cumulative distribution function (CDF) of the calibration scores be $
\hat F_m(q)\coloneqq \frac{1}{m}\sum_{k\in\mathcal D_{\mathrm{cal}}}\mathbf 1\{e_k\le q\}
$. Let $F_e$ denote their common marginal CDF. 
Define the event
\[
A
\coloneqq
\left\{
\sup_{q\in\mathbb R}\bigl|\hat F_m(q)-F_e(q)\bigr|
\le \frac{\epsilon}{2}
\right\}.
\]
That is the empirical distribution of the calibration scores uniformly approximates the population score distribution to within $\epsilon/2$ over all thresholds $q\in\mathbb R$. Following~\cite{rio2017asymptotic}, define
\[
\nu_m(q)\coloneqq \sqrt m\,\bigl(\hat F_m(q)-F_e(q)\bigr).
\]

Under Assumption~\ref{assump:mix_scores}, Proposition~7.1 of~\cite{rio2017asymptotic} implies
\[
\mathbb E\left[
\sup_{q\in\mathbb R} |\nu_m(q)|^2
~\middle|~
\mathcal D_{\mathrm{tr}}
\right]
\le
(3+8\Gamma)\Bigl(3+\frac{\log m}{2\log 2}\Bigr)^2.
\]
Therefore, by Markov's inequality,
\begin{align*}
\mathbb P(A^c\mid \mathcal D_{\mathrm{tr}})
&=
\mathbb P\left(
\sup_{q\in\mathbb R}\bigl|\hat F_m(q)-F_e(q)\bigr|
>
\frac{\epsilon}{2}
~\middle|~
\mathcal D_{\mathrm{tr}}
\right) \\
&\le 
\frac{
(3+8\Gamma)\bigl(3+\frac{\log m}{2\log 2}\bigr)^2
}{
m(\epsilon/2)^2
}.
\end{align*}
By the definition of $\epsilon$, the right-hand side equals $\epsilon/2$. Hence,
\[
\mathbb P(A^c\mid \mathcal D_{\mathrm{tr}})\le \frac{\epsilon}{2}.
\]

Next define
\[
q_-
\coloneqq
\inf\{q\in\mathbb R:F_e(q)\ge 1-\alpha-\epsilon/2\}.
\]
On the event $A$, for every $q<q_-$ we have
\[
F_e(q)<1-\alpha-\frac{\epsilon}{2},
\]
and thus
\[
\hat F_m(q)
\le
F_e(q)+\frac{\epsilon}{2}
<
1-\alpha.
\]

By the definition of the empirical $(1-\alpha)$-quantile function $\hat Q(\alpha) = \inf\{q\in\mathbb R : \hat F_m(q) \ge 1-\alpha\}$, it follows that
$
\hat Q(\alpha)\ge q_-
\text{ on } A.
$
Therefore,
\begin{align*}
\mathbb P\left(e\le \hat Q(\alpha)\mid \mathcal D_{\mathrm{tr}}\right)
&\ge
\mathbb P\left(e\le q_-,\,A\mid \mathcal D_{\mathrm{tr}}\right) \\
&\ge
\mathbb P\left(e\le q_-\mid \mathcal D_{\mathrm{tr}}\right)
-
\mathbb P(A^c\mid \mathcal D_{\mathrm{tr}}) \\
&=
F_e(q_-)-\mathbb P(A^c\mid \mathcal D_{\mathrm{tr}}) \\
&\ge
1-\alpha-\frac{\epsilon}{2}-\frac{\epsilon}{2} \\
&=
1-\alpha-\epsilon
\end{align*}
using union bound and that future score $e$ has marginal distribution $F_e$ by stationarity.

Finally, by the definition of the score in~\eqref{eq:score} and the bounds in~\eqref{eq:bounds},
\[
\{e\le \hat Q(\alpha)\}
=
\{\mathbf u\in\mathcal U\}.
\]
This proves~\eqref{eq:exact_cov_main}.
\end{proof}

\subsection{Synthetic Data Setup}
\label{app:syn}

We consider $n=25$ circuits partitioned into $G=5$ equal-sized groups. We set $\mu=3.0$, $\kappa=0.5$, and $\rho=0.4$. The noise terms satisfy $\varepsilon_{ik}\sim\mathcal N(0,0.3^2)$, and, unless otherwise noted, the within-group correlation parameter is set to $\gamma=0$, so circuit-level perturbations are independent across circuits. 
For each experiment, we generate $301$ time points: the first $100$ are used for model fitting, the next $200$ for conformal calibration, and the final one for testing. For the sweeps in Fig.~\ref{fig:synthetic_results_combined}, we vary the target miscoverage level over $\alpha\in\{0.03,0.05,0.08,0.10,0.12,0.15,0.20\}$ and the autoregressive parameter over $\rho\in\{0,0.2,0.4,0.6,0.75,0.85,0.95\}$, while keeping all other parameters at their baseline values.

\subsection{Semi-Synthetic Data Setup}
\label{app:semi_synth_generator}
The dataset contains 3,091 distribution circuits in California, of which we retain the $n=803$ circuits in High Fire Threat District (HFTD) Tier~2 or Tier~3 areas. Using observed data from 2020--2023, we construct a semi-synthetic generator by applying PCA with 8 retained components and fitting a lag-$1$ vector autoregressive model to simulate synthetic weather trajectories for 20 training years and 200 calibration years. Ignition counts in synhtetic years are then sampled from a Poisson generalized linear model fit on the real data.

All numeric entries in Table~\ref{tab:grouping_results_semi_synthetic} report mean\,(std) over $50$ semi-synthetic experiments, each generated using a distinct random seed. The reliability budget $W$ in constraint~\eqref{eq:tri-level-c} imposes a system-level limit on customer interruptions caused by wildfire-mitigation actions, including both fast-trip configuration and PSPS operations. We express this limit in SAIFI units by normalizing interruptions by the total number of customers $N$. Thus, $W=0.2$ means that wildfire-mitigation actions may contribute at most $0.2$ average interruptions per customer during the planning year. In the experiments, we report averages over $W \in [0.1,\,0.3]$, which is below the industry-standard annual SAIFI level of approximately $1$. 
Specifically, if circuit $i$ serves $h_i$ customers, one PSPS action contributes $h_i/N$ to system SAIFI, so we set $\delta_i=h_i/N$. For fast-trip configuration, we set $\gamma_i=\kappa h_i/N$, where $\kappa$ captures the expected number of EPSS activations per configured circuit-year relative to one full PSPS event. Based on 2024 PG\&E data, with approximately $2{,}500$ EPSS events across $3{,}000$ circuits, this average is about $0.83$ activations per circuit-year. Accordingly, we report averages over $\kappa \in [0.8,\,0.9,\,0.95]$.

\subsection{Optimization Problems Solved by the Baselines}
\label{app:opt}

For completeness, we state the optimization problems solved by each baseline. The proposed method and the circuit-only uncertainty baselines differ only in the construction of the uncertainty set $\mathcal U$ in the robust planning problem \eqref{eq:tri-level}, subject to \eqref{eq:tri-constraints}. In particular, the Bonferroni, Max-Rank, and confidence-interval baselines solve the same robust problem with their respective uncertainty sets, while the two variants of our method use the corresponding grouped uncertainty sets.

The \emph{Co-Optimized} benchmark removes the worst-case layer in \eqref{eq:tri-level} and jointly optimizes planning and operational decisions:
\[
\min_{\mathbf{x},\mathbf{y},\mathbf{z}}
\sum_{i=1}^n h_i(1-\beta_i y_i)(\hat u_i-z_i)
\]
subject to \eqref{eq:tri-constraints} with $u_i$ replaced by the nominal forecast $\hat u_i$. Thus, this benchmark preserves the coupling between planning and recourse, but does not hedge against uncertainty in future ignition realizations.

The \emph{Planning-Only} benchmark further removes the adaptive operational layer and solves
\[
\min_{\mathbf{x},\mathbf{y}}
\sum_{i=1}^n h_i\hat u_i(1-\beta_i y_i)
\]
subject to \eqref{eq:tri-level-a}, \eqref{eq:tri-level-b}, and $y_i \le x_i$ for all $i$. Thus, it chooses sectionalization and fast-trip decisions without accounting for adaptive PSPS recourse.

For all methods, out-of-sample evaluation is conducted in the same way: after fixing the first-stage decisions $(\mathbf{x},\mathbf{y})$ returned by each method, we recompute the optimal operational response under the held-out planning-period ignition $\mathbf{u}$ by solving the recourse problem \eqref{eq:recourse_milp}.

\end{document}